\newcommand{\NN}{\mathbb{N}}
\newcommand{\QQ}{\mathbb{Q}}
\newcommand{\ZZ}{\mathbb{Z}}
\newcommand{\CC}{\mathbb{C}}
\newtheorem{theorem}{Theorem}[section]
\newtheorem{conjecture}[theorem]{Conjecture}
\newtheorem{corollary}[theorem]{Corollary}
\newtheorem{lemma}[theorem]{Lemma}
\newtheorem{prop}[theorem]{Proposition}
\theoremstyle{definition}
\newtheorem{remark}{Remark}
\newenvironment{singnumalign}{
    \begin{equation}
    \begin{aligned}
}{
    \end{aligned}
    \end{equation}
    \ignorespacesafterend
}
\title{On Some Hypergeometric Supercongruence Conjectures of Long}
\author{Michael Allen}
\email{allenm3@oregonstate.edu}
\address{368 Kidder Hall, Oregon State University, Corvallis, OR 97331}
\begin{document}

\maketitle

%----------------------Abstract----------------------------------

\begin{abstract}
    In 2003, Rodriguez Villegas conjectured 14 supercongruences between hypergeometric functions arising as periods of certain families of rigid Calabi-Yau threefolds and the Fourier coefficients of weight 4 modular forms.  Uniform proofs of these supercongruences were given in 2019 by Long, Tu, Yui, and Zudilin.  Using p-adic techniques of Dwork, they reduce the original supercongruences to related congruences which involve only the hypergeometric series.  We generalize their techniques to consider six further supercongruences recently conjectured by Long.  In particular we prove an analogous version of Long, Tu, Yui, and Zudilin's reduced congruences for each of these six cases.  We also conjecture a generalization of Dwork's work which has been observed computationally and which would, together with a proof of modularity for Galois representations associated to our hypergeometric data, yield a full proof of Long's conjectures.
\end{abstract}
%
%-------------------------Introduction---------------------------
\section{Introduction and Statement of Results}
Hypergeometric functions have long been studied throughout many different branches of mathematics.  In recent years, there has been significant progress in studying hypergeometric functions in relation to automorphic forms.  Finite field analogs of hypergeometric functions were introduced by Greene in 1987 \cite{Greene}, and several authors have since related point counts on abelian varieties to these finite field hypergeometric functions \cite{Greene, Ono98, BCM, FLRST}.  Values of truncated hypergeometric series are often congruent modulo primes $p$ to various objects of mathematical interests.  In rarer instances, these congruences hold modulo larger than expected powers of $p$, in which case we refer to them as supercongruences.  Supercongruences have been established relating hypergeometric functions to Ramanujan-Sato series \cite{Long11, MO, Mortenson, Swisher, vH, Zudilin}, to Ap\'{e}ry numbers and similar sequences \cite{OSZ, AO}, and to modular forms \cite{McCarthy, LTYZ, Kilbourn, FM}.  In this paper, we consider the latter type of supercongruence.  In particular, we investigate a number of supercongruences between truncated hypergeometric series and Fourier coefficients of modular forms which were recently conjectured by Long \cite{Long20}. \par
We consider the hypergeometric datum given as the tuple $(\alpha, \beta, \lambda)$, where $\lambda$ is a nonzero element of an abelian extension of $\QQ$, and $\alpha = \left\{ r_1, \cdots, r_n\right\}$ and $\beta = \left\{1, q_1, \hdots, q_{n-1}\right\}$ are multisets of rationals satisfying $r_i - q_j \not\in \ZZ$ for all $i$ and $j$.  We define the classical (generalized) hypergeometric series $F(\mathbf{\alpha}, \mathbf{\beta}; \lambda)$ by
\[
    F(\mathbf{\alpha}, \mathbf{\beta}, \lambda) \colonequals {}_nF_{n-1} \left[\begin{matrix} r_1 & r_2 & \cdots & r_n \\ & q_1 & \cdots & q_{n-1} \end{matrix} \hspace{1ex};\hspace{1ex} \lambda \right] 
    = \sum_{k=0}^\infty \frac{(r_1)_k(r_2)_k\hdots(r_n)_k}{(q_1)_k\cdots(q_{n-1})_k} \frac{\lambda^k}{k!},
\]
where $(a)_k$ denotes the rising factorial or Pochhammer symbol 
\[ 
    (a)_k \colonequals \frac{\Gamma(a+k)}{\Gamma(a)} = a(a+1)(a+2)\cdots(a+k-1),
\]
and we take the convention $(a)_0=1$.  In particular, $(1)_k = k!$.
Given a prime $p$ and a positive integer $s$, it will be convenient for us to denote the truncation of our hypergeometric series at $p^s-1$ and evaluated at $1$ by
\[
    F_s(\mathbf{\alpha}, \mathbf{\beta}) \colonequals {}_nF_{n-1} \left[\begin{matrix} r_1 & r_2 & \cdots & r_n \\ & q_1 & \cdots & q_{n-1} \end{matrix} \hspace{1ex};\hspace{1ex} 1 \right]_{p^s-1} = \sum_{k=0}^{p^s-1} \frac{(r_1)_k(r_2)_k\hdots(r_n)_k}{(q_1)_k(q_2)_k\cdots(q_{n-1})_k(1)_k}
\]
Note that we take the non-standard convention to define $\beta$ as $\left\{1, q_1, \hdots, q_{n-1}\right\}$ rather than $\left\{q_1, \hdots, q_{n-1}\right\}$.  This choice allows us to more easily consider the additional $k!$ appearing in the definition of the hypergeometric series in our notation. \par
Rodriguez Villegas conjectured the following fourteen supercongruences for hypergeometric series appearing as periods of certain families of rigid Calabi-Yau threefolds.  These supercongruences have recently been proven by Long, Tu, Yui, and Zudilin \cite{LTYZ}.
\begin{theorem}[Long, Tu, Yui, Zudilin \cite{LTYZ}]\label{thm:LTYZ-supers}~\\
    Let $r_1, r_2 \in \left\{ \frac{1}{2}, \frac{1}{3}, \frac{1}{4}, \frac{1}{6}\right\}$ or $(r_1, r_2) \in \left\{ \left( \frac{1}{5}, \frac{2}{5}\right), \left( \frac{1}{8}, \frac{3}{8} \right), \left(\frac{1}{10}, \frac{3}{10} \right), \left( \frac{1}{12}, \frac{5}{12}\right) \right\}$.  Then for each prime $p > 5$, the congruence
    \[
        {}_4F_3 \left[ \begin{matrix} r_1 & 1-r_1 & r_2 & 1-r_2 \\ & 1 & 1 & 1 \end{matrix} \hspace{1ex}; \hspace{1ex} 1 \right]_{p-1} \equiv a_p(f_{\left\{r_1, 1-r_1, r_2, 1-r_2\right\}}) \pmod{p^3}
    \]
    holds, for some explicit modular form $f_{\left\{r_1, 1-r_1, r_2, 1-r_2\right\}}$ of weight 4.
\end{theorem}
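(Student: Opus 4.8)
The plan is to follow the $p$-adic strategy of Dwork, interpreting the datum $\alpha = \{r_1, 1-r_1, r_2, 1-r_2\}$ with $\beta = \{1,1,1,1\}$ as the hypergeometric motive attached to a rigid Calabi--Yau threefold. Because each exponent is paired with its complement, the associated Galois representation is two-dimensional, self-dual, and of motivic weight $3$; at a good prime $p$ the Frobenius has characteristic polynomial $1 - a_p T + p^3 T^2$, where $a_p = a_p(f)$ once modularity is invoked. I would first reduce the stated supercongruence to a purely $p$-adic statement about the unit root of this Frobenius. Writing $u = u_p$ for the unit root and $p^3/u$ for the non-unit root, we have $a_p = u + p^3/u \equiv u \pmod{p^3}$ in the ordinary case $p \nmid a_p$, so it suffices to prove the \emph{unit-root congruence} $F_1(\alpha,\beta) \equiv u \pmod{p^3}$. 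The supersingular primes, where $v_p(a_p) \ge 2$ and no unit root exists, would require a separate argument controlling the non-unit part directly.

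To access the unit root I would invoke Dwork's congruences for the truncations $F_s(\alpha,\beta)$. Introduce the formal series $F(\alpha,\beta;t) = \sum_k \frac{(r_1)_k(1-r_1)_k(r_2)_k(1-r_2)_k}{(k!)^4}\, t^k$ together with the Frobenius (``dash'') operation on the parameters. For primes $p$ lying in the residue classes for which the datum is stable under this operation, the ratios of successive truncations converge $p$-adically, and their limit is $u$. Dwork's congruences moreover let one express the unit root through a ratio of truncations, roughly $u \equiv F_2(\alpha,\beta)/F_1(\alpha,\beta) \pmod{p^3}$ up to the twist on parameters (which acts trivially by the self-duality $r_i \leftrightarrow 1-r_i$). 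This recasts the whole problem as a congruence purely among the truncated hypergeometric series, which is exactly the form of the reduced congruence referenced above, and isolates the arithmetic content from the geometry.

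The main obstacle is upgrading the naive mod-$p$ congruence to the full mod-$p^3$ statement. The mod-$p$ version follows from matching $F_1(\alpha,\beta)$ with the corresponding finite-field hypergeometric function via the Gross--Koblitz formula, which identifies the relevant Gauss sums with values of the Morita $p$-adic Gamma function and yields the trace of Frobenius modulo $p$. Promoting this to mod $p^3$ requires expanding the Pochhammer quotients and the $p$-adic Gamma function to third order in $p$ and showing that the first- and second-order perturbation terms cancel. That cancellation is forced by the Hodge--Tate weight gap of $3$ in the motive, but verifying it demands delicate control of the $p$-adic valuations of all error terms; this is precisely the ``super'' in supercongruence and is where essentially all of the difficulty concentrates.

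Finally, I would identify $u$, equivalently $a_p$, with the Fourier coefficient of the explicit weight-$4$ form $f_{\{r_1,1-r_1,r_2,1-r_2\}}$ by appealing to the known modularity of the Galois representation attached to each of these rigid Calabi--Yau threefolds. Since rigidity forces the interesting part of the middle cohomology to be two-dimensional with Hodge type $(3,0) + (0,3)$, the trace of Frobenius is exactly $a_p(f)$, the Euler factor has the shape $1 - a_p T + p^3 T^2$ used above, and combining this with the unit-root congruence completes the proof for every prime $p > 5$.
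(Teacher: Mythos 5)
Your route is essentially the first of the two proofs given in \cite{LTYZ} --- the $p$-adic one that this paper's introduction describes and then generalizes: use Dwork's unit-root theory plus known modularity of the rigid Calabi--Yau threefold to reduce the supercongruence to a statement about truncated series. But as written it has a genuine gap: the unit-root argument only functions at ordinary primes. When $p \mid a_p(f)$ there is no unit root of $1 - a_pT + p^3T^2$, the identity $a_p = u + p^3/u$ with $a_p \equiv u \pmod{p^3}$ is unavailable, and you explicitly set this case aside for ``a separate argument'' that you never supply. Since the theorem asserts the congruence for \emph{every} prime $p > 5$, and weight $4$ newforms do have non-ordinary primes, this is a missing piece of the proof rather than a deferred detail. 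This is precisely why Long, Tu, Yui, and Zudilin gave a second, logically independent proof via finite-field character sums, which is what establishes the statement beyond the ordinary primes; your proposal contains no counterpart to that argument.

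Within the ordinary case there are also two compressions worth flagging. First, Dwork's congruence pins down the unit root only to the modulus of the truncation level: $u \equiv F_{s+1}/F_s \pmod{p^{s+1}}$ (with dashed parameters, trivial here by the symmetry $r \leftrightarrow 1-r$), so your ``$u \equiv F_2/F_1 \pmod{p^3}$'' is off by one level; what one actually needs is the reduced congruence $F_{s+1} \equiv F_s \cdot F_1 \pmod{p^3}$ for all $s$ (Theorem \ref{thm:LTYZ-reduced-supers}), which then yields $u \equiv F_1 \pmod{p^3}$ in the limit. Second, that reduced congruence is where essentially all the difficulty lives, and your sketch replaces its proof with the assertion that the first- and second-order perturbation terms ``cancel because of the Hodge--Tate weight gap.'' The weight gap is a reason to \emph{expect} cancellation, not a proof of it; in \cite{LTYZ} (and in the present paper's generalization) the cancellation is established by explicit computation --- expansions in $\Gamma_p$ and the $G_k$ functions together with a residue-sum argument for auxiliary rational functions --- and no soft motivic argument is known to substitute for it.
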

Proofs of individual cases of Theorem \ref{thm:LTYZ-supers} had previously been given by Kilbourn \cite{Kilbourn},  McCarthy \cite{McCarthy}, and Fuselier and McCarthy \cite{FM}.  Long, Tu, Yui, and Zudilin \cite{LTYZ} give the first unified proof of all cases of this theorem, and in fact give two proofs.  The first uses $p$-adic techniques and holds for all ordinary primes --- primes at which $a_p(f) \not\equiv 0 \pmod{p}$.  The second proof uses character sum arguments and establishes the supercongruence for all primes greater than $7$.
Recently, Long \cite{Long20} has conjectured further supercongruences for hypergeometric functions following a similar shape to those in Theorem \ref{thm:LTYZ-supers}, but with more complicated parameters appearing in $\mathbf{\beta}$.
\begin{conjecture}[Long \cite{Long20}]\label{conj:long-conjectures}~\\[2mm]
    Let $(r_1, r_2, q) \in \left\{\left( \frac{1}{2}, \frac{1}{2}, \frac{4}{3} \right), \left(\frac{1}{2}, \frac{1}{2}, \frac{7}{6}\right), \left(\frac{1}{2}, \frac{1}{3}, \frac{7}{6}\right), \left(\frac{1}{2}, \frac{1}{3}, \frac{5}{4}\right),  \left( \frac{1}{2}, \frac{1}{4}, \frac{7}{6} \right),  \left( \frac{1}{2}, \frac{1}{2}, \frac{5}{4} \right)\right\}$.  Set $\mathbf{\alpha}_{(r_1, r_2)} \colonequals \left\{r_1, 1-r_1, r_2, 1-r_2\right\}$ and $\mathbf{\beta}_q \colonequals \left\{1, 1, q, 2-q\right\}$.  Let $\mathrm{HD}_{(r_1, r_2, q)}$ denote the hypergeometric datum $(\mathbf{\alpha}_{(r_1, r_2)}, \mathbf{\beta}_{q}, 1 )$.  For each of these hypergeometric data, there exists an explicit weight 4 modular form $f_{(r_1, r_2, q)}$ and Dirichlet character $\chi_{(r_1, r_2, q)}$ such that, for all primes $p \geq 7$,
    \[
        p \cdot {}_4F_3 \left[ \begin{matrix} r_1 & 1-r_1 & r_2 & 1-r_2 \\ & 1 & q & 2-q \end{matrix} \hspace{1ex} ; \hspace{1ex} 1 \right]_{p-1} \equiv \chi_{(r_1,r_2,q)}(p) a_p(f_{\left\{r_1, r_2, q\right\}}) \pmod{p^3}.
    \]
\end{conjecture}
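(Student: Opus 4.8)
The plan is to run the $p$-adic strategy that Long, Tu, Yui, and Zudilin used for Theorem~\ref{thm:LTYZ-supers}, adapted to the non-classical denominator parameters $\mathbf{\beta}_q = \{1,1,q,2-q\}$. The starting point is the (expected) geometric origin of $\mathrm{HD}_{(r_1,r_2,q)}$: attached to each datum there should be a rank-two unit-root subspace of a $p$-adic cohomology on which Frobenius acts with eigenvalues $\alpha_p,\beta_p$ where $v_p(\alpha_p)=0$, $v_p(\beta_p)=3$, and $\alpha_p+\beta_p=\chi_{(r_1,r_2,q)}(p)\,a_p(f_{(r_1,r_2,q)})$. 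Since $p\ge 7$ and $v_p(\beta_p)=3$, the right-hand side of Conjecture~\ref{conj:long-conjectures} satisfies $\chi(p)a_p(f)=\alpha_p+\beta_p\equiv\alpha_p\pmod{p^3}$. Thus the whole statement reduces to identifying the renormalized truncated series with the unit root,
\[
    p\cdot{}_4F_3\!\left[\begin{matrix} r_1 & 1-r_1 & r_2 & 1-r_2 \\ & 1 & q & 2-q \end{matrix}\hspace{1ex};\hspace{1ex}1\right]_{p-1}\equiv \alpha_p \pmod{p^3},
\]
together with the modularity input that makes $\alpha_p+\beta_p$ equal to $\chi(p)a_p(f)$.

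For the unit-root identification I would use Dwork's $p$-adic analytic machinery: the Dwork dash operation on the parameter multisets, the $p$-adic Gamma function, and the Gross--Koblitz formula to rewrite the Pochhammer quotients $p$-adically. Writing $F_s$ for the truncation ${}_4F_3[\cdots]_{p^s-1}$, Dwork's theory characterizes $\alpha_p$ as the $p$-adic limit of the ratios $pF_{s+1}/pF_s$ and supplies a tower of congruences between consecutive truncations, from which one expects
\[
    pF_1\equiv\alpha_p,\qquad pF_2\equiv\alpha_p^2 \pmod{p^3}.
\]
The repeated $1$ and the pair $q,2-q$ in $\mathbf{\beta}_q$ are precisely what forces the normalizing factor of $p$ and shifts the power of $p$ to which Dwork's congruence is valid. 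Establishing this refined, twisted Dwork congruence to precision $p^3$ is the ``generalization of Dwork's work'' announced in the abstract, and at present I can only verify it computationally.

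What I can establish unconditionally is the hypergeometric shadow of the above, namely the \emph{reduced congruence}
\[
    (pF_1)^2 \equiv pF_2 \pmod{p^3},
\]
the analog for the six data of the relation $F_2\equiv F_1^2$ proved in Theorem~\ref{thm:LTYZ-supers}; note it is consistent with $pF_1\equiv\alpha_p$ and $pF_2\equiv\alpha_p^2$. To prove it I would expand the truncation defining $F_2$, write each index as $k=i+jp$ with $0\le i,j<p$, and analyze the Pochhammer symbols $(a)_{i+jp}$ modulo $p^3$ through the $p$-adic Gamma function, splitting off the $p$-divisibility contributed by the denominator entries $q$ and $2-q$. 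The double sum then factors, modulo $p^3$, into a product matching $(pF_1)^2$, with the extra powers of $p$ bookkept by the dash operation on $\mathbf{\beta}_q$. The combinatorics mirror the classical case but must track the new $p$-adic valuations coming from the non-integral $q$; this is where the bulk of the careful computation lies, though it is routine in structure.

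The main obstacle is therefore not the reduced congruence but the two inputs that actually produce $a_p(f)$. For the generalized Dwork congruence I would attempt to extend Dwork's formal-congruence theorem to the twisted setting by analyzing the associated Dwork crystal directly, or by a creative-microscoping argument that introduces an auxiliary parameter to force the needed $p$-adic cancellations; it would then upgrade $(pF_1)^2\equiv pF_2$ to the unit-root identification $pF_1\equiv\alpha_p\pmod{p^3}$. For modularity I would seek to realize the Galois representation attached to $\mathrm{HD}_{(r_1,r_2,q)}$ concretely, as a rank-two piece cut out of the cohomology of an explicit fibration, determine its Hodge--Tate weights, conductor, and nebentypus (the last giving $\chi_{(r_1,r_2,q)}$, the twist by which the hypergeometric motive differs from the weight-$4$ form), and then match it with $f_{(r_1,r_2,q)}$ via a modularity-lifting theorem. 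In the setting of Theorem~\ref{thm:LTYZ-supers} both inputs are available --- Dwork's congruence in its classical form, and known modularity --- which is why that case is unconditional; for the non-classical $\mathbf{\beta}_q$ neither is yet proven, so a complete proof of Conjecture~\ref{conj:long-conjectures} must at present remain conditional on these two statements, with the reduced congruence supplying its unconditional hypergeometric core.
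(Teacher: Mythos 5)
Your proposal follows essentially the same route as the paper: you correctly treat the statement as a conjecture whose proof must be assembled from three ingredients --- an unconditional reduced congruence between truncated series, a generalized Dwork unit-root congruence that can at present only be verified computationally (this is precisely the paper's Conjecture \ref{conj:dwork-non-integral}), and modularity of the associated Galois representations --- and your reduction $\chi(p)a_p(f) = \alpha_p + \beta_p \equiv \alpha_p \pmod{p^3}$ at ordinary primes matches the paper's framing exactly. Two caveats on scope, though. First, the paper's unconditional core (Theorem \ref{thm:dwork-cong}) is the full family $p^{s+1}F_{s+1}(\alpha,\beta) \equiv p^s F_s(\alpha,\beta)\cdot pF_1(\alpha,\beta) \pmod{p^3}$ for \emph{every} $s \geq 0$, not only your $s=1$ case $(pF_1)^2 \equiv pF_2$; the higher-$s$ congruences are what interface with the limit $\gamma_p$ in Conjecture \ref{conj:dwork-non-integral}, so committing only to $s=1$ leaves the conditional implication to the full conjecture incomplete. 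Second, describing the reduced congruence as ``routine in structure'' undersells the genuinely new difficulty relative to \cite{LTYZ}: because the coefficients $H_{\alpha,\beta}(k)$ are \emph{not} $p$-integral (their valuation drops to $-1$ on $u_1 < k \leq t_1$, and inductively to $-s$), the paper must first carry out the valuation analysis of Section 3, handle the non-integral indices $b \equiv -q_2' \pmod{p}$ by a separate argument (Proposition \ref{prop:b-redux} via the modified $\widetilde{\Lambda}$), and then kill the coefficients $C_1, C_2$ through a residue-sum argument on auxiliary rational functions $R_i(t)$ --- the double sum does not simply factor into $(pF_1)^2$. Finally, a point you could not have known blind: modularity is already established by Li, Long, and Tu \cite{LLT} for three of the six data, so for those cases the only missing ingredient is the Dwork-type conjecture.
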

Long, Tu, Yui, and Zudilin's \cite{LTYZ} $p$-adic approach consists of two major components.  First, they use $p$-adic methods of Dwork \cite{Dwork} and known modularity of the particular Calabi-Yau threefolds from which these hypergeometric series arise to reduce Theorem \ref{thm:LTYZ-supers} to the following congruence, which involves only the hypergeometric series.
\begin{theorem}[Long--Tu--Yui--Zudilin \cite{LTYZ}]\label{thm:LTYZ-reduced-supers}
    Let $\alpha = \left\{r_1, 1-r_1, r_2, 1-r_2 \right\}$, and $\beta = \left\{1, 1, 1, 1\right\}$.  For each choice of $(r_1, r_2)$ appearing in Theorem \ref{thm:LTYZ-supers}, we have
    \[
        F_{s+1}(\alpha, \beta) \equiv F_s(\alpha, \beta) F_s(\alpha, \beta) \pmod{p^3}.
    \]
\end{theorem}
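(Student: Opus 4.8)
The plan is to recognize this as an instance of Dwork's $p$-adic congruences \cite{Dwork} and to prove it by the method that \cite{LTYZ} apply to this reduced setting. Write the $k$-th coefficient of the series as
\[
    A_k = \frac{(r_1)_k (1-r_1)_k (r_2)_k (1-r_2)_k}{(k!)^4},
\]
so that $F_s = \sum_{k=0}^{p^s-1} A_k$ and $A_0 = 1$. The decisive structural feature is that the multiset $\alpha = \{r_1, 1-r_1, r_2, 1-r_2\}$, whose entries share a common denominator $N$ with largest prime factor at most $5$, is stable \emph{as a multiset} under the Dwork operation $x \mapsto \langle px\rangle$ induced by multiplication by $p$ modulo $N$; this holds because each pair $\{r_i, 1-r_i\}$ is a full Galois orbit and $\alpha$ is closed under $x \mapsto 1-x$. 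It is exactly this Frobenius-stability that makes the two factors on the right-hand side the \emph{same} truncated series, with no Frobenius-twisted companion appearing.

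First I would rewrite $A_k$ through the Morita $p$-adic Gamma function $\Gamma_p$, using the standard expression of the Pochhammer symbols $(a)_k$ in terms of $\Gamma_p$-values together with the reflection formula $\Gamma_p(x)\Gamma_p(1-x) = \pm 1$; the pairing $r_i \leftrightarrow 1-r_i$ is what allows the resulting $\Gamma_p$-expression for $A_k$ to collapse into a form governed by reflection. Next, for $0 \le k < p^{s+1}$ I would write $k = j + \ell p^s$ with $0 \le j < p^s$ and $0 \le \ell < p$, and use the functional equation of $\Gamma_p$ to obtain a factorization
\[
    A_{j + \ell p^s} \equiv A_j \cdot A_\ell^{(s)} \cdot \bigl(1 + p\,E_1(j,\ell) + p^2 E_2(j,\ell) + \cdots\bigr),
\]
where $A_\ell^{(s)}$ is the corresponding coefficient at the coarser scale $p^s$ and the $E_i$ are explicit $p$-adic quantities. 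Summing over $j$ and $\ell$ and isolating the main term produces the Dwork relation among the consecutive truncations $F_{s+1}, F_s, F_{s-1}$, from which the stated congruence (in which the equal factors $F_s \cdot F_s$ record the absence of a twist) follows once the error is controlled modulo $p^3$.

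The main obstacle is upgrading the congruence from the level $p$ (or $p^2$) that Dwork's formal theory delivers to the supercongruence modulo $p^3$. The delicate point is showing that the first- and second-order error sums $\sum_{j,\ell} A_j A_\ell^{(s)} E_1(j,\ell)$ and its $p^2$ analogue vanish to the required order. This is precisely where the reflection symmetry of $\alpha$ under $x \mapsto 1-x$ must be exploited: the part of the error expansion that is odd about $x = \tfrac{1}{2}$ should cancel in pairs across the symmetric multiset, while the surviving even part is annihilated modulo $p^3$ by Wolstenholme-type harmonic-sum congruences such as $\sum_{k=1}^{p-1} \tfrac{1}{k} \equiv 0 \pmod{p^2}$ and $\sum_{k=1}^{p-1} \tfrac{1}{k^2} \equiv 0 \pmod{p}$. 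Verifying that this reflection cancellation genuinely carries the expected valuation gain, uniformly across all the listed $(r_1, r_2)$ and without obstruction from the primes $2,3,5$ dividing $N$ (whence the hypothesis $p > 5$, equivalently $p \nmid N$), is where the real work lies; the remainder is bookkeeping with the $\Gamma_p$ functional equations.
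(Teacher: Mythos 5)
A point of orientation first: the paper does not prove Theorem \ref{thm:LTYZ-reduced-supers} itself --- it quotes it from \cite{LTYZ} --- but its proof of Theorem \ref{thm:dwork-cong} is a direct generalization of the LTYZ argument, so that is the benchmark against which your proposal should be measured. Your skeleton matches that argument in spirit: dash-stability of $\alpha$, conversion of Pochhammer quotients into $\Gamma_p$-values, a $p$-adic digit decomposition, and an error expansion to be killed modulo $p^3$. (Two structural remarks: the paper and \cite{LTYZ} split off the \emph{bottom} digit, $k = a + bp$ with $0 \leq a < p$ and $0 \leq b < p^s$, so the Dwork dash acts exactly once, whereas your split $k = j + \ell p^s$ forces the dash to be iterated $s$ times inside $A_\ell^{(s)}$ and makes the $\Gamma_p$ bookkeeping strictly harder; also, what your outline would actually produce is $F_{s+1} \equiv F_s F_1 \pmod{p^3}$, which is LTYZ's genuine statement --- the factor $F_s F_s$ printed in the paper appears to be a typo, compare Theorem \ref{thm:dwork-cong}.)

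There are two genuine gaps. First, the factorization you assert, $A_{j+\ell p^s} \equiv A_j A_\ell^{(s)}\bigl(1 + pE_1 + p^2E_2 + \cdots\bigr)$, is false as stated: the correct factorization (Lemma \ref{lem:shift-fact-to-p-gamma} and display (\ref{eq:shift-fact-reduction}), following \cite{LTYZ}) carries unit-sized correction factors of the shape $\bigl(1 + b/r_j'\bigr)^{\nu(a,t_j)}$ --- the $\Lambda_{\alpha,\beta}$ term --- which are \emph{not} congruent to $1$ modulo $p$. These factors turn on precisely at the indices where the valuation of a Pochhammer symbol jumps, and the proof must play them off against the extra powers of $p$ that the coefficient $A_a$ acquires at those same indices (Corollary \ref{cor:hypergeometric-valuations-through-p}); writing the multiplicative error as $1 + O(p)$ erases this central difficulty. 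Second, and more seriously, your mechanism for the mod-$p^3$ vanishing of the error sums --- odd/even cancellation about $\tfrac{1}{2}$ plus Wolstenholme --- is unsubstantiated and is not what carries the argument. After the expansion one must show that sums of the form $\sum_a H(a)\bigl(pJ_1(a) + \cdots\bigr)$ and $\sum_a H(a)\bigl(p^2 J_2(a) + \cdots\bigr)$ vanish modulo $p^3$, where $J_1, J_2$ are built from the Long--Ramakrishna functions $G_k = \Gamma_p^{(k)}/\Gamma_p$. These are weighted hypergeometric-harmonic sums, and Wolstenholme-type congruences such as $\sum_{k=1}^{p-1} 1/k \equiv 0 \pmod{p^2}$ say nothing about them. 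The actual proof (in \cite{LTYZ}, generalized in the paper's treatment of $C_1$ and $C_2$) constructs auxiliary rational functions $R_i(t)$ from shifted Pochhammer symbols, verifies that the denominator degree exceeds the numerator degree by at least two, and then exploits the vanishing of the total residue sum together with $p$-adic estimates on the partial-fraction coefficients to force $C_i \equiv 0 \pmod{p^3}$. Without this residue device (or an equivalent one, such as the Whipple-type $_7F_6$ evaluations used in earlier single-case proofs), your outline stops exactly where the theorem begins.
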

They then establish this reduced congruence using $p$-adic techniques.  The main goal of this paper is to generalize the approach taken by Long, Tu, Yui, and Zudilin to prove an analogous version of Theorem \ref{thm:LTYZ-reduced-supers} for the hypergeometric series appearing in Conjecture \ref{conj:long-conjectures}.
\begin{theorem}\label{thm:dwork-cong}
    Fix a prime $p \geq 7$.  For each of the six hypergeometric data $\mathrm{HD}_{(r_1, r_2, q)}$ appearing in Conjecture \ref{conj:long-conjectures} and all $s \geq 0$ we have
    \begin{equation*}
        p^{s+1} F_{s+1}(\mathbf{\alpha}, \mathbf{\beta}) \equiv p^s F_s(\mathbf{\alpha}, \mathbf{\beta}) \cdot pF_1(\mathbf{\alpha}, \mathbf{\beta}) \pmod{p^3}.
    \end{equation*}
\end{theorem}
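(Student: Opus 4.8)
The plan is to adapt the $p$-adic Dwork-congruence machinery of Long, Tu, Yui, and Zudilin to the case where $\beta = \{1,1,q,2-q\}$ carries the extra parameters. Writing the summand as $A_k = \prod_i (r_i)_k / \prod_j (q_j)_k$ where the $q_j$ now include $q$ and $2-q$, I would first set up the ``Dwork dash'' operation: for the datum $(\alpha,\beta)$, define the companion datum $(\alpha',\beta')$ obtained by replacing each parameter $a$ by its $p$-adic ``dash'' $a'$, the unique element of the same residue class with $0 < a' \leq 1$ such that $a \equiv a' \pmod{p}$ (more precisely, $a' = (a + \lfloor\cdot\rfloor)/p$-type normalization as in Dwork's theory). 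The classical identity I would exploit is the factorization of the Pochhammer ratio when the index $k$ is split as $k = up^s + v$ with $0 \le v < p^s$, which lets one write $A_{up^s+v} \approx A_v^{(s)} \cdot A_u^{\prime}$ up to controlled $p$-adic error; the factor $p^{s+1}$ on the left of the claimed congruence is exactly what is needed to clear the denominators introduced by the $p$-divisibility of $(q)_k$ and $(2-q)_k$, since $q$ and $2-q$ are no longer integers.

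First I would establish the single-step local analysis: prove that for each summand, $p \cdot A_k / A_{\lfloor k/p\rfloor}$ is $p$-adically integral and congruent mod $p^3$ to the corresponding ``primitive'' summand of $pF_1(\alpha,\beta)$, using the $p$-adic Gamma function $\Gamma_p$ and the Gross--Koblitz / reflection relations to handle the half-integer and rational parameters. The key is a Pochhammer splitting lemma: for $0 \le v < p^s$ and $0 \le u < p$, one has
\begin{equation*}
    \frac{(a)_{up^s+v}}{(a)_v} \equiv p^{\,\delta(a,u,v)} \, (a^{\dagger})_u \cdot (\text{unit}) \pmod{p^{3}},
\end{equation*}
where $a^{\dagger}$ is the Dwork-shifted parameter and $\delta$ tracks the power of $p$ coming from parameters that hit a $p$-adic pole. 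Summing this splitting over the two ranges and matching the powers of $p$ against the prefactors $p^{s+1}, p^s, p$ gives the desired multiplicativity-up-to-error structure.

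Next I would perform the induction on $s$. The base case $s=0$ is the tautology $pF_1 \equiv pF_1 \pmod{p^3}$. For the inductive step I would write $F_{s+1}$ as a double sum over $k = up^s + v$, apply the splitting lemma to each summand, and use the induction hypothesis to replace the ``inner'' block $p^s F_s$ by its value; the careful bookkeeping of which parameters contribute a factor of $p$ (only $q$ and $2-q$ do, and exactly once each across a full $p$-block) is what produces the scaling $p^{s+1} = p^s \cdot p$ on the nose. Throughout I would need the mod-$p^3$ estimates on $\Gamma_p$-ratios, i.e.\ expansions of the form $\Gamma_p(a+p^s x)/\Gamma_p(a) \equiv 1 + c_1 p^s + c_2 p^{2s} \pmod{p^{3s}}$ (valid once $3s \geq 3$), together with the observation that low values of $s$ can be checked directly.

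The main obstacle I expect is the presence of $p$-adically non-integral denominators coming from $(q)_k$ and $(2-q)_k$: unlike the LTYZ setting where $\beta = \{1,1,1,1\}$ and every Pochhammer ratio is manifestly a $p$-adic unit or has transparent valuation, here the summands can have genuine $p$ in the denominator, and controlling the valuation term-by-term (rather than only after summation) is delicate. Concretely, I would have to verify that the extra global factor of $p$ on the left-hand side is precisely enough to render every truncated term integral and that no cancellation is lost mod $p^3$; this requires a careful case analysis of when $q \equiv$ an integer mod $p$ and a uniform valuation bound $v_p((q)_{up^s+v}) - v_p((q)_v) - v_p((q^{\dagger})_u)$ that is independent of the particular $(r_1,r_2,q)$ among the six data. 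Handling the remaining low-$s$ and small-prime edge cases (and confirming the bound $p \geq 7$ suffices) would be done by direct computation.
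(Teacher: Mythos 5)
Your setup---the Dwork dash operation, the splitting of the summation index, the $\Gamma_p$-expansion of Pochhammer ratios, and the valuation bookkeeping forced by the genuine denominators coming from $(q)_k$ and $(2-q)_k$---matches the framework of the paper's proof, and your identification of $p$-integrality as the new difficulty relative to Long--Tu--Yui--Zudilin is accurate. But there is a genuine gap at the core: you treat the congruence as though it followed from the splitting lemma plus matching powers of $p$, i.e.\ as though the summands were multiplicative up to $O(p^3)$ errors. They are not. The splitting only yields, term by term, something of the shape $H(a+bp) \equiv H(a)\,H'(b)\,\Lambda(a+bp)\bigl(1+J_1(a)bp+J_2(a)(bp)^2\bigr) \pmod{p^3}$, where $J_1,J_2$ are built from $G_k=\Gamma_p^{(k)}/\Gamma_p$; hence the difference between $p^{s+1}F_{s+1}$ and $p^sF_s\cdot pF_1$ is a double sum of first- and second-order perturbation terms, each individually only $O(p)$ or $O(p^2)$. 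The entire content of the theorem is that these perturbation sums vanish modulo $p^3$. In the paper this is the statement $C_1\equiv C_2\equiv 0\pmod{p^3}$, where $C_1,C_2$ are explicit sums of terms like $pH(a)J_1(a)$, $pH(a)J_2(a)$, $pH(a)/q_1'$, $pH(a)/r_1'$ over initial ranges of $a$, and it is proved not by bookkeeping but by constructing rational functions $R_1(t),R_2(t)$ whose partial-fraction coefficients reproduce the terms of $C_1$ and $C_2$ up to $O(p^3)$ and then invoking the residue sum theorem (the total residue vanishes because the denominator degree exceeds the numerator degree by at least two). Your proposal contains no mechanism of this kind, and the induction on $s$ cannot supply one: the inductive step from $s$ to $s+1$ requires exactly this vanishing statement, which is independent of $s$. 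Indeed the paper uses no induction for the main congruence---it reduces uniformly in $s$ to the vanishing of each $b$-indexed term, then to $C_1b+C_2b^2\equiv 0\pmod{p^3}$.

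Two smaller points. First, the paper splits off the bottom digit, $k=a+bp$ with $0\le a\le p-1$ and $0\le b\le p^s-1$, which is what the dash operation is adapted to; your split $k=up^s+v$ strips the top digit and would force you to iterate the dash $s$ times, which is awkward here precisely because $\mathbf{\beta}$ is not closed under the dash (only $\mathbf{\alpha}$ is). Second, your claim that only $q$ and $2-q$ contribute factors of $p$, ``exactly once each across a full $p$-block,'' is not quite right: the numerator Pochhammers $(r_j)_k$ also pick up multiples of $p$ within a block (once $k>t_j$), and the resulting valuation profile of $pH(k)$---taking the values $1,0,1,3,4,3$ on the successive ranges cut out by $u_1,t_1,t_2,t_4,u_2$---is what makes the congruence well-defined and kills the tail terms. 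The dip to valuation $0$ on $u_1<k\le t_1$, together with the possible non-integrality when $b\equiv -q_2'\pmod p$, is also why the paper needs the separate $\widetilde{\Lambda}$, $\widetilde{I}$ argument for that residue class of $b$; your case analysis gestures at this but does not resolve it.
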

To fully prove Conjecture \ref{conj:long-conjectures} from Theorem \ref{thm:dwork-cong} it would be necessary to generalize Dwork's work for these cases and to establish the expected modularity of the Galois representations associated to our hypergeometric data by Katz \cite{Katz90, Katz09} and Beukers, Cohen, and Mellit \cite{BCM}.  Li, Long, and Tu \cite{LLT} have recently proven this modularity for the hypergeometric data corresponding to the first three tuples $(r_1, r_2, q)$ listed in Conjecture \ref{conj:long-conjectures}.  The Dirichlet characters and modular forms corresponding to these hypergeometric data are listed below in Figure \ref{fig:modular-forms-for-hypergeometric-data}. \par
\begin{figure}[ht]\label{fig:modular-forms-for-hypergeometric-data}
    \centering
    \begin{tabular}{|c|c|c|c|}
        \hline 
             $(r_1, r_2, q)$ &  $(c,f)$ & $\chi_{(r_1, r_2, q)}$ & $f_{(r_1, r_2, q)}$ \\
        \hline
        \rule{0pt}{3ex}$\left( \frac{1}{2}, \frac{1}{2}, \frac{4}{3} \right)$ & $\left( \frac{1}{6}, \frac{1}{2}\right)$ & $\epsilon$ & $f_{24.4.a.a}$ \\[2mm]
        \hline
        \rule{0pt}{3ex}$\left( \frac{1}{2}, \frac{1}{2}, \frac{7}{6} \right) $ & $\left(\frac{1}{3}, \frac{1}{2}\right)$ & $\epsilon$ &    $f_{12.4.a.a}$ \\[2mm]
        \hline
        \rule{0pt}{3ex}$\left( \frac{1}{2}, \frac{1}{3}, \frac{7}{6} \right)$ & $\left(\frac{1}{3} , \frac{1}{3}\right)$ & $\left(\frac{3}{\cdot}\right)$ & $f_{48.4.a.c}$ \\[2mm]
        \hline
    \end{tabular}
    \caption{The values of $\chi$ and $f$ in Conjecture \ref{conj:long-conjectures} for the hypergeometric data considered by Li, Long, and Tu.  The modular forms are listed by their LMFDB \cite{LMFDB} labels}
\end{figure}
Watkins' \cite{Watkins} implementation of hypergeometric motives in \texttt{Magma} can be used to check a generalization of Dwork's results to these hypergeometric series.  The following conjecture holds computationally for all primes up to 1000 when $s=1$, all primes up to 100 when $s = 2$, and all primes up to 50 when $s = 3$ or $4$.
\begin{conjecture}\label{conj:dwork-non-integral}
    Assume $p \geq 7$ is a prime, and let $s \geq 1$.  For each $\mathrm{HD}_{(r_1, r_2, q)}$ we are considering, $pF_s(\alpha,\beta)/F_{s-1}(\alpha, \beta) \in \ZZ_p$ and there exists $\gamma_p = \gamma_p(\alpha, \beta) \in \ZZ_p^\times$ such that
    \[
        p \frac{F_s(\alpha, \beta)}{F_{s-1}(\alpha, \beta)} \equiv \gamma_p \mod{p^s}.
    \]
    Moreover, letting $\rho_{(\alpha, \beta)}$ denote the Galois representation of the absolute Galois group of $G_\QQ$ associated to the hypergeometric data $(\alpha, \beta, 1)$ by Beukers, Cohen, and Mellit \cite{BCM}, the limit $\gamma_p$ is a unit root of the characteristic polynomial of $\rho_{(\alpha, \beta)}(\mathrm{Frob}_p)$.
\end{conjecture}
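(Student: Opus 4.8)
Write $A_k$ for the $k$th coefficient of $F(\alpha,\beta;x)$, set $F_s(x)=\sum_{k=0}^{p^s-1}A_kx^k$ so that $F_s=F_s(1)$, and put $u_s:=pF_s/F_{s-1}$. This is a Dwork-style unit-root assertion, and I would prove it in three stages: integrality, $p$-adic convergence of the $u_s$ to a unit, and identification of the limit with the Frobenius unit root.

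For the first stage I would carry out an explicit valuation analysis of the coefficients. Since $p\ge 7$ the denominators of $q$ and $2-q$ are prime to $p$, so by Legendre's formula the valuations of the shifted Pochhammer symbols $(q)_k,\,(2-q)_k$ are controlled by the residues of $-q,\,-(2-q)\bmod p$, and those of $(r_i)_k,\,(1-r_i)_k$ by $-r_i,\,-(1-r_i)\bmod p$. Reading $v_p(A_k)$ from the base-$p$ digits of $k$, one shows $v_p(A_k)\ge -s$ for all $k<p^s$, so that $G_s(x):=p^sF_s(x)$ has coefficients in $\ZZ_p$; and one shows $v_p(F_1)=-1$ exactly, so that $G_1=pF_1\in\ZZ_p^\times$. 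The unit of negative valuation is produced precisely by the non-integral $\beta$-parameters $q,2-q$. Together with the congruences of the next stage this base case propagates to $G_s\in\ZZ_p^\times$ for every $s$, whence $u_s=G_s/G_{s-1}\in\ZZ_p^\times$; this yields both the integrality claim and the unitness of any limit.

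For the second stage I would establish the full Dwork congruences for these data, of which Theorem \ref{thm:dwork-cong} is only the mod $p^3$ shadow. Following Dwork \cite{Dwork}, in the integral normalization $G_s(x)=p^sF_s(x)$ one proves the formal congruences
\[
 G_{s+1}(x)\,G_{s-1}(x^p)\equiv G_s(x)\,G_s(x^p)\pmod{p^s}
\]
in $\ZZ_p[x]$. Specializing at the Teichm\"uller point $x=1$, which is fixed by $x\mapsto x^p$, and dividing by the unit $G_s G_{s-1}$, one obtains $u_{s+1}\equiv u_s\pmod{p^s}$. Hence $(u_s)$ is $p$-adically Cauchy and converges to a unit $\gamma_p\in\ZZ_p^\times$ with $u_s\equiv\gamma_p\pmod{p^s}$, exactly the stated congruence. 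I expect that upgrading Theorem \ref{thm:dwork-cong} from the single modulus $p^3$ to the uniform family $p^s$ will demand genuinely new input rather than a formal iteration, and this is part of why the statement remains conjectural.

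The main obstacle is the third stage: identifying $\gamma_p$ with the unit root of $\det\!\big(T-\rho_{(\alpha,\beta)}(\mathrm{Frob}_p)\big)$. Here I would use the Frobenius structure on the hypergeometric connection furnished by Katz \cite{Katz90,Katz09} and the motivic realization of Beukers, Cohen, and Mellit \cite{BCM}: over a disc about an ordinary Teichm\"uller point the connection carries a horizontal, $p$-adically analytic Frobenius $\Phi$, and the non-integrality of $\beta$ enters as a Tate twist shifting the smallest Newton slope off $0$ --- this twist is exactly the source of the extra factor $p$ in $u_s=pF_s/F_{s-1}$. Granting ordinariness (which the computations support), the unit-root subcrystal is one-dimensional and Dwork's unit-root formula realizes the eigenvalue of $\Phi$ on it as $\lim_s pF_s/F_{s-1}=\gamma_p$. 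The genuinely hard part is that $x=1$ is a \emph{regular singular} point of the hypergeometric equation, so $\Phi$ must be analytically continued to $x=1$ from a nearby ordinary point, or computed directly from the local monodromy there, and then matched against the Beukers--Cohen--Mellit trace formula --- a finite-field hypergeometric, i.e.\ Gauss-sum, expression for $\rho_{(\alpha,\beta)}(\mathrm{Frob}_p)$ --- to conclude that this eigenvalue is the unit root of the characteristic polynomial. Carrying out this comparison rigorously for the six specific non-integral data, rather than merely observing it in \texttt{Magma}, is what would turn the conjecture into a theorem.
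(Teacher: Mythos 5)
The statement you were asked to prove is Conjecture \ref{conj:dwork-non-integral} of the paper: the paper gives \emph{no proof} of it, only numerical evidence (Watkins' \texttt{Magma} implementation for $s \leq 4$ and small primes), and it explicitly warns that ``the fact that our hypergeometric series do not always have $p$-adically integral coefficients means that this conjecture does not generalize from the approach taken by Dwork.'' So there is no paper proof to compare against, and your proposal, by its own admission, is a research plan rather than a proof: both of its load-bearing steps are left open. That is the correct epistemic state, but two of your intermediate claims are asserted as provable when they are not.

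Concretely: (i) in stage one you claim that valuation analysis shows $v_p(F_1) = -1$ ``exactly,'' so that $pF_1 \in \ZZ_p^\times$. Coefficientwise analysis gives only $p^sF_s \in \ZZ_p$ (this is Proposition \ref{prop:hypergeometric-coefficient-valuations}); by Corollary \ref{cor:hypergeometric-valuations-through-p} the terms $pH(k)$ with $u_1 < k \leq t_1$ are units, and a sum of several units can perfectly well be divisible by $p$. Indeed, via Conjecture \ref{conj:long-conjectures} reduced mod $p$, unitness of $pF_1$ is equivalent to ordinarity of $p$ for the associated weight-$4$ form, an arithmetic condition that no base-$p$ digit analysis will deliver; it is part of the conjecture's content (you tacitly concede this in stage three by ``granting ordinariness,'' which contradicts stage one's claim to have proved it). (ii) In stage two you write that ``following Dwork'' one proves $G_{s+1}(x)G_{s-1}(x^p) \equiv G_s(x)G_s(x^p) \pmod{p^s}$. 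This is exactly where the method breaks: Dwork's proof \cite{Dwork} of such congruences requires integrality hypotheses on the coefficients and on ratios of the form $H(a+bp)/H(b)$, and for the present $\beta$ these ratios carry the non-integral factor $\left(1+\sfrac{b}{q_1'}\right)$ and genuine poles at $b \equiv -q_2' \pmod{p}$ --- precisely the phenomena the paper must fight through with $\Lambda$ and $\widetilde{\Lambda}$ in Proposition \ref{prop:b-redux}. Renormalizing to $G_s = p^sF_s$ does not restore Dwork's hypotheses, because the failure is termwise; and Theorem \ref{thm:dwork-cong} itself is proved by a bespoke residue-sum argument pinned to the fixed modulus $p^3$ (poles of order $2$ and $3$ in $R_1, R_2$), with no visible iteration to modulus $p^s$. (iii) Stage three --- continuing the unit-root Frobenius to the regular singular point $x=1$ and matching with the Beukers--Cohen--Mellit trace --- is, as you say, entirely open. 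The overall architecture (integrality, Dwork-type congruences at $x=1$, unit-root identification, with the Tate twist explaining the extra factor of $p$) is a sensible roadmap and consistent with how the paper frames the problem, but as a proof it has the two concrete gaps above, and the second is the one the paper itself identifies as the fundamental obstruction.
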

A proof of Conjecture \ref{conj:dwork-non-integral}, along with the modularity of the given Galois representations, would together imply Conjecture \ref{conj:long-conjectures} from Theorem \ref{thm:dwork-cong}.  However, the fact that our hypergeometric series do not always have $p$-adically integral coefficients means that this conjecture does not generalize from the approach taken by Dwork.  \par
The rest of the paper is organized as follows.  First, we record some necessary background about $p$-adic interpretations of hypergeometric functions in Section \ref{sec:background}.  In Section \ref{sec:valuations} we prove some preliminary results on the $p$-adic valuations of the given hypergeometric coefficients.  Section \ref{sec:proofs} is dedicated to the proof of Theorem \ref{thm:dwork-cong}.

\subsection{Acknowledgments}
The author thanks Ling Long for many helpful conversations throughout the course of this work.  Additionally, the author thanks Robert Lemke Oliver for assistance in computing data in support of Conjecture \ref{conj:dwork-non-integral}.

%------------------P-ADIC BACKGROUND----------------------
\section{$p$-adic Background}\label{sec:background}
Throughout the paper we work over the $p$-adic integers $\ZZ_p$.  The first of the two proofs of Theorem \ref{thm:LTYZ-supers} appearing in \cite{LTYZ} utilizes $p$-adic perturbation techniques originating in \cite{CLZ, Long11} and further developed in \cite{LR}.  As the Pochhammer symbol $(a_k)$ can be written as $\Gamma(x+n)/\Gamma(x)$, we can translate into the $p$-adic setting using Morita's $p$-adic $\Gamma$-function \cite{Morita}, which is defined on integers $n$ by setting
\begin{equation}\label{eq:gamma-p-defn} \notag
    \Gamma_p(x) = (-1)^x \prod_{\substack{0 < i < x \\ p \nmid i}} i,
\end{equation}
and then extended continuously to the $p$-adic integers $\ZZ_p$.  By comparing (\ref{eq:gamma-p-defn}) to the definition of the classical $\Gamma$ function we obtain the following identity for all integers $n$:
\begin{equation}\label{eq:gamma-to-p-gamma}
    \Gamma(n) = (-1)^n \Gamma_p(n) \biggr\lfloor \frac{n-1}{p} \biggr\rfloor ! p^{\lfloor \sfrac{(n-1)}{p} \rfloor}.
\end{equation}
For an overview of the function $\Gamma_p$ and its applications see for example Diamaond \cite{Diamond}.  The function $\Gamma_p$ satisfies the following identities, which are analogous to the functional equation and Euler's reflection formula for the classical $\Gamma$ function.
\begin{lemma}\label{lem:p-gamma-properties}~\\
    Let $x \in \ZZ_p$ and $\Gamma_p$ be defined as in (\ref{eq:gamma-p-defn}).  We have
    \begin{equation*}\label{eq: p-gamma-functional-equation}
        \frac{\Gamma_p(x+1)}{\Gamma_p(x)} = 
        \begin{cases}
            -x, & \text{if } x \in \ZZ_p^{\times} \\
            -1, & \text{if } x \in p\ZZ_p. 
        \end{cases}
    \end{equation*}
        and
    \begin{equation*}\label{eq:p-gamma-reflection-formula}
        \Gamma_p(x)\Gamma_p(1-x) = (-1)^{x_0}
    \end{equation*}
    where $x_0 \in \left\{1, 2, \cdots, p\right\}$ satisfies $x-x_0 \equiv 0 \pmod{p}$.  Note that $x_0$ differs slightly from our previous definition of $[x]_0$, as $x_0 = p$ when $[x]_0 = 0$.  In all other cases these quantities agree.
\end{lemma}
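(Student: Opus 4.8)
The plan is to prove both identities first for positive integers $x$, where the product definition of $\Gamma_p$ applies directly, and then to extend them to all of $\ZZ_p$ by continuity. The structural observation driving the extension is that $\ZZ_p$ is the disjoint union of the two clopen sets $\ZZ_p^\times$ and $p\ZZ_p$, that the positive integers prime to $p$ are dense in $\ZZ_p^\times$, and that the positive integers divisible by $p$ are dense in $p\ZZ_p$. Since $\Gamma_p$ is continuous by construction, any identity between continuous functions that holds on these dense subsets holds throughout.

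For the functional equation I would take $x$ to be a positive integer and directly compare the defining products for $\Gamma_p(x+1)$ and $\Gamma_p(x)$. The product for $\Gamma_p(x+1)$ runs over $0 < i < x+1$ and that for $\Gamma_p(x)$ over $0 < i < x$, so the two differ only in the factor $i = x$, which is present in the former precisely when $p \nmid x$; the sign contributes $(-1)^{x+1}/(-1)^x = -1$. Hence the ratio is $-x$ when $p \nmid x$ and $-1$ when $p \mid x$. Applying the density argument above separately on $\ZZ_p^\times$ and on $p\ZZ_p$, and using that $x \mapsto -x$ and the constant $-1$ are continuous, yields the functional equation on all of $\ZZ_p$.

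For the reflection formula I would set $g(x) \colonequals \Gamma_p(x)\Gamma_p(1-x)$ and derive a translation recursion. Writing the functional-equation factor as $c(x)$, so that $\Gamma_p(x+1) = c(x)\Gamma_p(x)$ and $\Gamma_p(1-x) = c(-x)\Gamma_p(-x)$, one finds
\[
    g(x+1) = \Gamma_p(x+1)\Gamma_p(-x) = \frac{c(x)}{c(-x)}\, g(x),
\]
and a short computation gives $c(x)/c(-x) = -1$ for $x \in \ZZ_p^\times$ and $c(x)/c(-x) = +1$ for $x \in p\ZZ_p$. I would then verify that the candidate $(-1)^{x_0}$ obeys exactly this recursion under $x \mapsto x+1$: when $x$ is a unit with $x_0 \le p-2$ the exponent increases by one; when $x_0 = p-1$ the translate lands in $p\ZZ_p$ with $(x+1)_0 = p$; and when $x \in p\ZZ_p$, so $x_0 = p$, the translate is a unit with $(x+1)_0 = 1$. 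In the latter two cases the oddness of $p$, giving $(-1)^p = -1$, is exactly what makes the parities match the factor $c(x)/c(-x)$. Finally I would anchor the recursion at $x = 1$, where the product definition gives $g(1) = \Gamma_p(1)\Gamma_p(0) = (-1)(1) = -1 = (-1)^{1}$, propagate the equality $g(x) = (-1)^{x_0}$ to all integers, and extend it to $\ZZ_p$ using that $g$ is continuous while $(-1)^{x_0}$ is locally constant, the integers being dense.

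The functional equation is the routine part, being a one-line comparison of products. I expect the \emph{main obstacle} to lie in the reflection formula: setting up the translation recursion correctly and matching it against the combinatorial definition of $x_0$ across the boundary residues $x_0 = p-1$ and $x_0 = p$, where one must check that these are precisely the places where the hypothesis that $p$ is odd is invoked. Once the recursion and the base case at $x=1$ are secured, the continuity extension is immediate.
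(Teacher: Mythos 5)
The paper itself offers no proof of this lemma: it is quoted as a standard property of Morita's $\Gamma_p$ (the reader is pointed to Diamond's survey), so there is no internal argument to compare against. Your proposal is a correct and complete proof, and it is essentially the classical one from the literature: verify the functional equation at positive integers by comparing the defining products, transfer it to $\ZZ_p^\times$ and $p\ZZ_p$ by density of the prime-to-$p$ (resp.\ divisible-by-$p$) positive integers in those clopen sets, and then prove the reflection formula by showing that $g(x) \colonequals \Gamma_p(x)\Gamma_p(1-x)$ and the candidate $(-1)^{x_0}$ satisfy the same translation recursion and agree at one point. Your recursion $g(x+1) = \bigl(c(x)/c(-x)\bigr)g(x)$, with ratio $-1$ on units and $+1$ on $p\ZZ_p$, and your case check of $(-1)^{x_0}$ at the boundary residues $x_0 = p-1$ and $x_0 = p$ (the two places where oddness of $p$ enters) are exactly right; this route also correctly sidesteps the fact that the product formula cannot be applied to $\Gamma_p(1-x)$ directly when $x$ is a positive integer, since $1-x$ is then nonpositive and the naive product would give the wrong value there.

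Two small points deserve to be made explicit in a final write-up, though neither is a gap. First, to state the functional equation as a ratio you need $\Gamma_p(x) \neq 0$; this follows from the same density argument, since $\Gamma_p$ takes unit values at positive integers and $\ZZ_p^\times$ is closed in $\ZZ_p$, so $\Gamma_p(\ZZ_p) \subseteq \ZZ_p^\times$. Second, your anchor $g(1) = \Gamma_p(1)\Gamma_p(0) = -1$ uses $\Gamma_p(0) = 1$; within the paper's framework this is the empty-product reading of the displayed definition at $x = 0$, and it is consistent with the continuous extension (via the Wilson-type congruence $\prod_{0 < i < p^k,\, p \nmid i} i \equiv -1 \pmod{p^k}$, which gives $\lim_k \Gamma_p(p^k) = 1$), so the base case is sound. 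With those two sentences added, your argument stands as a self-contained proof of the lemma.
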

For each $k \geq 0$, define the function $G_k(a) \colonequals \Gamma_p^{(k)}(a)/\Gamma_p(a)$.  These functions are considered by Long and Ramakrishna in \cite{LR}, and satisfy many nice analytic properties.  For example, logarithmically differentiating the second identity in Lemma (\ref{eq:p-gamma-reflection-formula}) yields the reflection formula
\begin{equation}\label{eq:G1-reflection}
    G_1(a) = G_1(1-a).
\end{equation}
Rewriting in terms of $\Gamma_p$ and differentiating again produces
\begin{equation}\label{eq:G2-reflection}
    G_2(a)+G_2(1-a)=2G_1^2(a).
\end{equation}
Long and Ramakrishna \cite{LR} additionally show that $G_2(0) = G_1^2(0)$.  Combining this with (\ref{eq:G1-reflection}) and (\ref{eq:G2-reflection}) when $a = 0$ gives us the related identity
\begin{equation}\label{eq:G1-to-G2-at-1}
    G_1^2(1) = G_2(1).
\end{equation}
The following theorem of Long and Ramakrishna makes the $p$-adic approach particularly appealing for our supercongruences as it allows us to rewrite quotients of $\Gamma_p$ functions, which arise naturally from hypergeometric functions, as a $p$-adic series involving the $G$ functions.
\begin{theorem}[Long--Ramakrishna, \cite{LR}]\label{thm:p-gamma-approx}
    For $p \geq 5$, $r \in \NN, a \in \ZZ_p, m \in \CC_p$ satisfying $v_p(m) \geq 0$ and $t \in \left\{0, 1, 2 \right\}$ we have
    \[
        \frac{\Gamma_p(a+mp^r)}{\Gamma_p(a)} \equiv \sum_{k=0}^t \frac{G_k(a)}{k!}(mp^r)^k \mod p^{(t+1)r}.
    \]
    The above result also holds for $t = 4$ if $p \geq 11$.
\end{theorem}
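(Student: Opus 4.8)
The statement is a $p$-adic Taylor estimate, so the plan is to expand $\Gamma_p$ about $a$ and then control the tail of the resulting series. First I would record the two structural inputs coming from Morita's construction of $\Gamma_p$ \cite{Morita}: that $\Gamma_p$ is nonvanishing on $\ZZ_p$, so $\Gamma_p(a)\in\ZZ_p^{\times}$ and the quotients $G_k(a)=\Gamma_p^{(k)}(a)/\Gamma_p(a)$ are well defined; and that $\Gamma_p$ is analytic on each residue disc $c+p\ZZ_p$. Since $v_p(mp^r)=r+v_p(m)\geq r\geq 1$, the increment $h\colonequals mp^r$ lies in $p\ZZ_p$, so $a$ and $a+h$ sit in the common disc $a+p\ZZ_p$, and the Taylor expansion of $\Gamma_p$ at $a$ converges at $a+h$ (indeed $v_p(h)\geq 1>\tfrac{1}{p-1}$, comfortably inside the disc of convergence). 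Dividing the expansion $\Gamma_p(a+h)=\sum_{k\geq 0}\frac{\Gamma_p^{(k)}(a)}{k!}h^{k}$ by the unit $\Gamma_p(a)$ gives
\[
    \frac{\Gamma_p(a+mp^r)}{\Gamma_p(a)}=\sum_{k=0}^{\infty}\frac{G_k(a)}{k!}(mp^r)^{k}.
\]

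The terms $k=0,\dots,t$ are exactly the claimed right-hand side, so everything reduces to bounding the tail $\sum_{k\geq t+1}\frac{G_k(a)}{k!}(mp^r)^{k}$. By the ultrametric inequality it suffices to show each individual term with $k\geq t+1$ is divisible by $p^{(t+1)r}$. Using $v_p((mp^r)^{k})=k(r+v_p(m))\geq kr$, the valuation of the $k$-th term is at least $v_p(G_k(a)/k!)+kr$, so the whole theorem comes down to the inequality
\[
    v_p\!\left(\frac{G_k(a)}{k!}\right)+kr\geq (t+1)r\qquad(k\geq t+1).
\]
Because $t+1-k<0$ on this range, the left side is smallest when $r$ is smallest, so it is enough to treat $r=1$, i.e.\ to show $v_p(G_k(a)/k!)+k\geq t+1$ for every $k\geq t+1$.

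The heart of the matter, and the step I expect to be the main obstacle, is the lower bound on $v_p(G_k(a)/k!)$. The natural route is to pass to $\log\Gamma_p$: writing $\Gamma_p(a+h)/\Gamma_p(a)=\exp\bigl(\sum_{j\geq 1}b_j(a)\,h^{j}\bigr)$ with $b_j(a)=(\log\Gamma_p)^{(j)}(a)/j!$, the $p$-adic polygamma values $(\log\Gamma_p)^{(j)}(a)$ carry a factor $(j-1)!$, so $v_p(b_j(a))\geq -v_p(j)$, while the outer exponential contributes the denominators $1/m!$ of $\exp$. Expanding the exponential (equivalently, expressing $G_k/k!$ through the complete Bell polynomials in the $b_j$, following \cite{LR}) and minimizing the resulting valuation by Legendre's formula $v_p(k!)=(k-s_p(k))/(p-1)$, where $s_p(k)$ is the base-$p$ digit sum, yields the required estimate; the extremal monomials are those concentrating weight on indices divisible by $p$, where both the $b_p$ factors and the $1/m!$ denominators bite. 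Feeding this bound into $v_p(G_k(a)/k!)+k\geq t+1$ turns the proof into a finite verification over the residues of $k$, and this is precisely where the hypotheses on $p$ enter: the check clears for every $k\geq t+1$ once $p\geq 5$ when $t\leq 2$, whereas for $t=4$ the larger range $k\geq 5$ together with the denominators appearing at the first multiples of $p$ accounts for the stronger assumption $p\geq 11$. Making this valuation bound sharp enough to survive at the binding values of $k$ — rather than the clean analytic setup — is where the real work lies.
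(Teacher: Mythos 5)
First, a framing point: the paper you are working from does not prove this statement at all --- Theorem \ref{thm:p-gamma-approx} is imported verbatim from Long--Ramakrishna \cite{LR} and used as a black box --- so the only meaningful comparison is with the proof in \cite{LR}. Your first half is the same routine setup any proof must begin with: local analyticity of $\Gamma_p$ on residue discs, the Taylor expansion divided by the unit $\Gamma_p(a)$, the ultrametric reduction to a termwise bound on the tail, and the observation that $r=1$ is the worst case. All of that is correct. But the entire content of the theorem is the remaining step, a lower bound on $v_p\bigl(G_k(a)/k!\bigr)$ valid for all $k\geq t+1$ that is strong enough to give $v_p\bigl(G_k(a)/k!\bigr)+k\geq t+1$ and that simultaneously explains the hypotheses on $p$. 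You do not prove this; you sketch a route (log, Bell polynomials, Legendre) and then assert that ``the check clears.'' That is a placeholder exactly where the theorem lives, so the proposal is not a proof.

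Moreover, the sketch's load-bearing claims are not just unproved but numerologically inconsistent with the statement being proved. The assertion that $v_p(b_j(a))\geq -v_p(j)$ --- i.e.\ that the $p$-adic polygamma values carry an integral factor of $(j-1)!$ --- is precisely what needs proof, and it is expected to fail at indices $j$ divisible by $p-1$, where von Staudt--Clausen-type denominators enter; similarly, your claim that the extremal contributions ``concentrate weight on indices divisible by $p$'' places the first obstruction at $k=p$, whereas every relevant threshold for $\Gamma_p$ sits at $p-1$ (radius of local analyticity $p^{-1/(p-1)}$, Legendre's formula $v_p(k!)=(k-s_p(k))/(p-1)$, Bernoulli-type denominators when $(p-1)\mid k$). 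Concretely: if your stated bounds were correct, the identical computation would establish the congruence for $(t,p)=(3,5)$ and $(t,p)=(4,7)$, cases that \cite{LR} pointedly exclude; the fact that they stop at $t=2$ for $p=5$ signals that the obstruction is at $k=4=p-1$ (where $v_5(G_4(a)/4!)$ can be negative), a case your accounting treats as harmless. So either your sketch misses real denominators --- which is what I believe --- or it proves strictly more than \cite{LR} claim; either way it cannot be accepted as written. To complete the argument you must actually establish a coefficient estimate in which the denominators are controlled by $p-1$, of the shape $v_p\bigl(G_k(a)/k!\bigr)\geq -\lfloor k/(p-1)\rfloor$ or the refinement that \cite{LR} extract from the analyticity properties of $\Gamma_p$, and only then run the finite verification over $k$ that produces the stated ranges of $t$ and $p$.
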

Again we can take logarithmic derivatives to obtain identities for the $G$ functions as well.  The only cases of these related identities we will need are when $t=0$ and $r=1$, in which case we find that, for $k \in \left\{1,2\right\}$,
\begin{equation}\label{eq:Gk-p-shift}
    G_k(a+mp) - G_k(a) = O(p).
\end{equation}
Also of use to us is Dwork's framework for $p$-adic hypergeometric functions \cite{Dwork}.  Throughout the paper we use $\lfloor x \rfloor$ to denote the floor function, $\left\{ x \right\}$ to denote the fractional part $x - \lfloor x \rfloor$, and for any $a \in \ZZ_p$ we use $[a]_0$ to denote the first $p$-adic digit of $a$.  Dwork's dash operation is the map $': \QQ \cap \ZZ_p \to \QQ \cap \ZZ_p$ defined by
\[
    a' = \frac{a+[-a]_0}{p}.
\]
Each of the six possible choices of $\mathbf{\alpha} = \mathbf{\alpha}_{(r_1, r_2)}$ appearing in Theorem \ref{thm:dwork-cong} are closed under this operation for all primes $p \geq 7$.  That is, as multisets,
\[
    \mathbf{\alpha} = \left\{r_1, 1-r_1, r_2, 1-r_2\right\} = \left\{r_1', (1-r_1)', r_2', (1-r_2)' \right\} = \mathbf{\alpha}'.
\]
Our choices of $\mathbf{\beta} = \mathbf{\beta}_{q}$ are not closed under the Dwork dash operation, instead we have 
\begin{equation}\label{eq:beta-dwork-dash}
    \mathbf{\beta}' = \left\{1', 1', q', (2-q)'\right\} = \left\{1, 1, q-1, 2-q\right\}.
\end{equation}
The exact relation between $\beta$ and $\beta'$, in particular whether $q-1 = q'$ or $q-1 = (2-q)'$, depends on the congruence of $p$ modulo the denominator of $q$. \par
For the remainder of the paper, with our multisets $\mathbf{\alpha}$ and $\mathbf{\beta}$ and our prime $p \geq 7$ fixed, we relabel $\mathbf{\alpha} = \left\{r_1, r_2, r_3, r_4\right\}$ and $\mathbf{\beta} = \left\{ q_1, q_2, q_3, q_4 \right\}$ so that 
\begin{equation}\label{eq:dwork-dash-ordering}
    r_1' \leq r_2' \leq r_3' \leq r_4' \quad \text{and} \quad q_1' \leq q_2' \leq q_3' \leq q_4'. 
\end{equation}
For each $\mathrm{HD}_{(r_1, r_2, q)}$, this choice ensures that $r_2 = r_3 = \frac{1}{2}$ and $q_3 = q_4 = 1$.  For each $a \in \QQ \cap \ZZ_p$, we observe
\begin{equation}\label{eq:dwork-dash-difference}
    1-a' = \frac{p-a-[-a]_0}{p} = \frac{1-a+p-1-[-a]_0}{p} = \frac{1-a+[a-1]_0}{p} = (1-a)'.
\end{equation}
By definition of $\alpha$, $1-r_1$ must belong to $\alpha$, so one of $r_2, r_3$ or $r_4$ must equal $1-r_1$.  If $r_i = 1-r_1$, (\ref{eq:dwork-dash-difference}) implies that $r_i' = 1-r_1'$.  Our choice of ordering in (\ref{eq:dwork-dash-ordering}) guarantees that $r_4' = 1-r_1'$ and hence that $r_4 = 1-r_1$.  A similar relationship holds between $r_2$ and $r_3$.  We rewrite this as
\begin{equation}\label{eq:alpha-sums}
    r_1'+r_4' = r_2'+r_3' = 1 \qquad \text{and} \qquad r_1+r_4 = r_2+r_3 = 1.
\end{equation}
By definition of $\mathbf{\beta}$, we have $\left\{q_1', q_2'\right\} = \left\{q_i-1, 2-q_i\right\}$ where $i \in \left\{1, 2 \right\}$ is chosen so that $q_i > 1$.  Thus, 
\begin{equation}\label{eq:beta-sum}
    q_1' + q_2' = 1 \qquad \text{and} \qquad q_1+q_2 = 2.
\end{equation}
For each $1 \leq j \leq 4$ we define
\begin{equation}\label{eq:ts-us-defn}
    t_j \colonequals [-r_j]_0 = pr_j' - r_j \quad \text{and} \quad u_j \colonequals [-q_j]_0 = pq_j' - q_j.
\end{equation}
As $r_1' \leq r_2'$, the definition of the dash operation implies that $r_1 - r_2 \leq t_2 - t_1$.  But $r_1-r_2 > -1$ and $t_2 - t_1 \in \ZZ$, and so $t_1 \leq t_2$.  Additionally, $r_1 + r_4 + t_1 + t_4$ is divisible by $p$ by definition of $t_i$ and (\ref{eq:alpha-sums}), and so $t_1 + t_4$ is congruent to $[-(r_1+r_4)]_0$ modulo $p$.  We also know that $r_1+r_4 = 1$ so $[-(r_1+r_4)]_0 = p-1$.  As $0 \leq t_1, t_4 \leq p-1$ it must therefore be the case that $t_1+t_4 = p-1$.  Similar arguments using (\ref{eq:alpha-sums}) and (\ref{eq:beta-sum}) hold for the remaining terms, and so we have
\begin{equation}\label{eq:t-identities}
    t_1 \leq t_2 \leq t_3 \leq t_4 \qquad \text{and} \qquad t_1+t_4 = t_2+t_3 = p-1
\end{equation}
and also
\begin{equation}\label{eq:u-identities}
    u_1 \leq u_2 \leq u_3 \leq u_4 \qquad \text{and} \qquad u_1+u_2 = p-2, \quad u_3 = u_4 = p-1.
\end{equation}
For each $\mathrm{HD}_{(r_1, r_2, q)}$ we have $q_1' < r_i' < q_2'$ for each $i \in \left\{1, 2, 3, 4 \right\}$, and so weaving this together we have
\begin{equation}\label{eq:u-t-comparison}
    u_1 < t_1 \leq t_2 = t_3 \leq t_4 < u_2 < u_3 = u_4 = p-1.
\end{equation}
Finally we observe that with this change of labeling $\beta$ can be written as
\begin{equation}\label{eq:beta-dwork-dash-relabel}
    \mathbf{\beta} = \left\{1, 1, q_1'+1, q_2' \right\}.
\end{equation}

%--------------------------COEFFICIENT-p-ADIC-VALUATIONS-----------------------------------------
\section{$p$-adic valuations of hypergeometric coefficients.}\label{sec:valuations}
Recall that the $p$-adic valuation of a rational number $r$, which we denote by $v_p(r)$ is equal to the exponent $k$ on $p$ when $r$ is written in the form $(a/b)p^k$ with $a$ and $b$ both relatively prime to $p$.  As we wish to reduce our hypergeometric series modulo $p^3$, it will be useful to know the exact $p$-adic valuations of each of the hypergeometric coefficients defined by
\begin{equation}\label{eq:hypergeometric-coefficient-defn} \notag
    H(k) \colonequals H_{\mathbf{\alpha}, \mathbf{\beta}}(k) \colonequals \frac{(r_1)_k(r_2)_k(r_3)_k(r_4)_k}{(q_1)_k(q_2)_k(1)_k(1)_k},
\end{equation}
where $\alpha, \beta$ are as in Conjecture \ref{conj:long-conjectures}.  Given $a = \sum_{n \geq 0} a_np^n \in \ZZ_p$, set
\begin{equation}\label{eq:p-adic-trunc-defn}
    [a]_i = \sum_{n = 0}^{i} a_n p^n.
\end{equation}
We first consider the valuations of the rising factorials.
\begin{lemma}\label{lemma:rising-factorial-valuation}
    Let $p$ be a prime, let $k \in \NN$, and let $a \in \QQ \cap \ZZ_p$.  For each integer $i \geq 0$, let $[a]_i$ be defined as in (\ref{eq:p-adic-trunc-defn}).  The $p$-adic valuation of $(a)_k$ is given by the formula
    \[
        v_p((a)_k) = \sum_{i=1}^\infty \biggr\lfloor \frac{k+p^i - [-a]_{i-1}-1}{p^i} \biggr\rfloor.
    \]
\end{lemma}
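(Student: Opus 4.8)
The plan is to reduce the statement to a lattice-point count that generalizes Legendre's formula for $v_p(n!)$ to an arbitrary base point $a\in\QQ\cap\ZZ_p$. First I would write the Pochhammer symbol as the product $(a)_k=\prod_{j=0}^{k-1}(a+j)$, so that $v_p((a)_k)=\sum_{j=0}^{k-1}v_p(a+j)$. Then, using that for any nonzero $x\in\ZZ_p$ one has $v_p(x)=\sum_{i\geq 1}\mathbf{1}[\,p^i\mid x\,]$ (since $p^i\mid x$ exactly when $i\leq v_p(x)$), I would substitute and interchange the two finite sums to obtain $v_p((a)_k)=\sum_{i\geq 1}N_i$, where $N_i$ denotes the number of indices $j$ with $0\leq j\leq k-1$ and $p^i\mid(a+j)$.

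Next I would identify the congruence condition defining $N_i$. By the definition in (\ref{eq:p-adic-trunc-defn}), $[-a]_{i-1}$ is the unique integer in $\{0,1,\dots,p^i-1\}$ congruent to $-a$ modulo $p^i$. Since $j$ is an integer, the divisibility $p^i\mid(a+j)$ in $\ZZ_p$ is equivalent to $j\equiv -a\equiv[-a]_{i-1}\pmod{p^i}$. Thus $N_i$ counts the integers in the interval $[0,k-1]$ lying in the single fixed residue class $c\colonequals[-a]_{i-1}$ modulo $p^i$. The eligible integers are $c,c+p^i,c+2p^i,\dots$, so when $c\leq k-1$ there are $\lfloor(k-1-c)/p^i\rfloor+1$ of them, and when $c\geq k$ there are none.

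The key observation is that both cases are captured uniformly by $N_i=\big\lfloor(k+p^i-c-1)/p^i\big\rfloor$: in the first case this equals $\lfloor(k-1-c)/p^i\rfloor+1$, while in the second case the inequalities $0\leq k+p^i-c-1\leq p^i-1$ force the floor to vanish. Substituting $c=[-a]_{i-1}$ and summing over $i$ yields exactly the claimed formula.

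The computation is largely routine, and the main point requiring care is precisely this uniform floor expression: one must check that the single quantity $\lfloor(k+p^i-[-a]_{i-1}-1)/p^i\rfloor$ simultaneously produces the correct positive count and returns $0$ across the boundary range $k\leq[-a]_{i-1}\leq p^i-1$. I would also note that the sum over $i$ is genuinely finite, which both justifies the interchange of summation and makes the displayed series well defined: finiteness is immediate from $v_p((a)_k)=\sum_{j}v_p(a+j)<\infty$, and it can be seen directly from the fact that for $a$ not a non-positive integer the $p$-adic digits of $-a$ do not eventually vanish, whence $[-a]_{i-1}\to\infty$ and $N_i=0$ for all large $i$. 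This is also the only hypothesis hidden in the statement, namely that $(a)_k\neq 0$, which holds automatically for the non-integral parameters $a=r_i,q_i$ arising from Conjecture \ref{conj:long-conjectures}.
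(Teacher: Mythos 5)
Your proposal is correct and follows essentially the same route as the paper: expand $(a)_k$ into its $k$ factors, observe that the terms divisible by $p^i$ are exactly those with index $j\equiv[-a]_{i-1}\pmod{p^i}$, and package the count of such indices in $[0,k-1]$ as the uniform floor expression $\bigl\lfloor (k+p^i-[-a]_{i-1}-1)/p^i \bigr\rfloor$. Your treatment is in fact slightly more careful than the paper's, since you explicitly verify that the floor vanishes in the boundary case $k\leq[-a]_{i-1}$ and that the infinite sum has only finitely many nonzero terms, both of which the paper leaves implicit.
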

\begin{proof}
Expanding the rising factorial we have
\begin{equation} \notag
    v_p \left(a\right)_k = v_p[a(a+1)\cdots(a+k-1)] = \sum_{j=0}^{k-1} v_p(a+j)).
\end{equation}
For each $i$, the smallest non-negative integer $j$ such that $a+j$ is divisible by $p^i$ is $j = [-a]_{i-1}$.  As the indexing of $j$ begins at zero, this occurs at the $([-a]_{i-1}+1)^{st}$ term of the rising factorial.  Multiples of $p^i$ will also appear at each term which differs from $a+[-a]_{i-1}$ by a multiple of $p^i$.  There are $(k-[-a]_{i-1}-1)/p^i$ such terms after the first which are divisible by $p^i$.  Thus, the total number of terms which are divisible by $p^i$ is equal to
\[
    1 + \biggr\lfloor \frac{k-[-a]_{i-1}-1}{p^i} \biggr\rfloor = \biggr\lfloor \frac{k+p^i-[-a]_{i-1}-1}{p^i} \biggr\rfloor.
\]
So, the sum
\[
    \sum_{i=1}^\infty \biggr\lfloor \frac{k+p^i - [-a]_{i-1} - 1}{p^i} \biggr\rfloor
\]
counts every term in $(a)_k$ which is divisible by $p$ once, every term that is divisible by $p^2$ twice, and so on.  Therefore it is equal to $v_p((a)_k)$, as was to be shown.
\end{proof}
\begin{remark}
    In the case where $a=1$, we have $(1)_k = k!$ and Lemma \ref{lemma:rising-factorial-valuation} reduces to Legendre's formula
    \begin{equation}\notag
        v_p (k!) = \sum_{i=1}^{\infty} \biggr\lfloor \frac{k}{p^i} \biggr\rfloor.
    \end{equation}
\end{remark}
The next lemma allows us to compute the truncation $[a]_i$ as defined in (\ref{eq:p-adic-trunc-defn}) for any $a \in \QQ \cap \ZZ_p$.
\begin{lemma}\label{lem:trunc-calc}
    Let $p$ be a prime and $a/b \in \QQ$ be written in reduced terms such that $b \geq 1$ and $p \nmid b$.  Let $\lambda_i$ denote the least positive residue of $ap^{-i}$ modulo $b$.  Then for each $i \in \NN$,
    \[
        [-a/b]_{i-1} = \frac{\lambda_ip^i-a}{b}.
    \]
\end{lemma}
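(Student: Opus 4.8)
The plan is to show that the integer $M := (\lambda_i p^i - a)/b$ coincides with $[-a/b]_{i-1}$ by verifying the two properties that characterise the latter via (\ref{eq:p-adic-trunc-defn}): that $M$ agrees with $-a/b$ in its first $i$ $p$-adic digits (equivalently $M \equiv -a/b \pmod{p^i}$ as elements of $\ZZ_p$), and that $0 \le M \le p^i - 1$. First I would confirm that $M$ is genuinely an integer: since $\lambda_i$ is a residue of $ap^{-i}$ modulo $b$ (with $p^{-i}$ the inverse of $p^i$ mod $b$, which exists because $p \nmid b$), we have $\lambda_i p^i \equiv a \pmod b$, so $b \mid \lambda_i p^i - a$.

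The congruence is then immediate from a one-line computation:
\[
    M + \frac{a}{b} = \frac{\lambda_i p^i - a}{b} + \frac{a}{b} = \frac{\lambda_i}{b}\, p^i,
\]
and since $\lambda_i / b \in \ZZ_p$ (again as $p \nmid b$), the right-hand side lies in $p^i \ZZ_p$. Thus $M \equiv -a/b \pmod{p^i}$; as $[-a/b]_{i-1}$ satisfies the same congruence and both are integers, they differ by an integer multiple of $p^i$.

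The remaining, and genuinely delicate, step is the range bound $0 \le M \le p^i - 1$, which upgrades this congruence to the desired equality. Unwinding, the bound is equivalent to $a \le \lambda_i p^i < a + b p^i$, i.e. to $\lambda_i$ being the representative of the residue class of $ap^{-i}$ modulo $b$ lying in the half-open interval $[a/p^i,\, a/p^i + b)$. The least-positive-residue normalisation of $\lambda_i$, so that $1 \le \lambda_i \le b$, places it in $[1, b]$, and I expect the main work to be checking that this window is the correct one --- that is, that $a$ is non-negative and small enough relative to $p^i$ (which holds for every parameter $r_j, q_j$ arising in Conjecture \ref{conj:long-conjectures}, all having numerators at most $p$) --- together with handling the boundary cases, such as $b \mid a$. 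This sizing argument is the one place where care is needed rather than routine manipulation; once it is in hand, $M = [-a/b]_{i-1}$ follows from the uniqueness of the truncation.
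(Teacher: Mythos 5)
Your proposal follows essentially the same route as the paper: both arguments come down to identifying $M = (\lambda_i p^i - a)/b$ as the unique representative of the class of $-a/b$ modulo $p^i$ lying in $\{0, 1, \ldots, p^i-1\}$ --- the paper phrases this as solving for the least non-negative $m$ with $p^i \mid (a+bm)$, you phrase it as verifying the candidate. Your integrality and congruence steps are complete and correct.

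You are right, though, that the range bound is the real content, and your caution there is better founded than you may realize: for arbitrary $a/b \in \QQ$ the lemma as stated is \emph{false}. Take $a/b = 100/3$, $p = 7$, $i = 1$: then $\lambda_1 = 1$ and $(\lambda_1 p - a)/b = -31$, while $[-100/3]_0 = 4$; the congruence holds but the range fails. What saves the lemma is precisely the hypothesis you isolate, $0 < a \le p^i$, which does hold for every parameter it is applied to in the paper (all numerators are at most $7 \le p \le p^i$). Under that hypothesis the step you defer is a one-liner rather than ``the main work'': since $\lambda_i \ge 1$ we get $M \ge (p^i - a)/b \ge 0$, and since $\lambda_i \le b$ and $a > 0$ we get $M \le p^i - a/b < p^i$, hence $M \le p^i - 1$ because $M$ is an integer. (The boundary case $b \mid a$ just means $b = 1$ by reducedness, and the same two inequalities apply.) Note finally that the paper's own proof silently makes the same assumption: its closing assertion that the least admissible $n$ equals $\lambda_i$ requires the least element of the residue class of $ap^{-i}$ subject to $n p^i \ge a$ to coincide with the least positive element of that class, which is exactly the condition $\lambda_i p^i \ge a$. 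So your proof becomes complete once you insert the one-line sizing argument (with $0 < a \le p^i$ stated as a hypothesis); as written it stops one step short, but the step it stops short of is also the step the paper elides.
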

\begin{proof}
    By definition, $[-a/b]_{i-1}$ is the smallest positive integer such that
    \begin{equation}\label{eq:trunc-div}
        p^i \mid \left(a + b\left[ -a/b \right]_{i-1}\right).
    \end{equation}
    Thus, there exists $n \in \NN$ such that $a + b[-a/b]_{i-1} = np^i$.  Solving this equation for $[-a/b]_{i-1}$ yields
    \begin{equation}\label{eq:trunc-rewrite}
        \left[ -a/b \right]_{i-1} = \frac{np^i -a}{b}
    \end{equation}
    The fact that $[-a/b]_{i-1}$ is the least integer satisfying (\ref{eq:trunc-div}) implies that it is the smallest integer of the form $(np^i-a)/b$. Thus, finding $[-a/b]_{i-1}$ is equivalent to finding the smallest $n \in \NN$ such that $(np^i-a)/b \in \ZZ$.  For this to be an integer, we must have $np^i-a \equiv 0 \pmod{b}$.  Thus,
    \[
        n \equiv ap^{-i} \pmod{b}.
    \]
    As $n$ is the least such integer, it follows that $n = \lambda_i$.  This, along with (\ref{eq:trunc-rewrite}), completes the proof.
\end{proof}
For $\mathbf{\alpha} = \left\{r_1, r_2, r_3, r_4\right\}$, define $\mathbf{\alpha}_i$ as the multiset $\left\{[-r_1]_i, [-r_2]_i, [-r_3]_i, [-r_4]_i\right\}$ for each $i \geq 0$.  For $\mathbf{\beta} = \left\{1, 1, q_1, q_2\right\}$ we define $\mathbf{\beta}_i$ similarly.  Using Lemma \ref{lem:trunc-calc} we are able to compute $\mathbf{\alpha}_i$ and $\mathbf{\beta}_i$ for each $\mathrm{HD}_{(r_1, r_2, q)}$ and for each $i$.  These values are recorded in Figure \ref{fig:negative-truncations}.
\begin{figure}[ht]
    \begin{tabular}{|c|c|}
         \hline
         $\mathbf{\alpha}$ & $\mathbf{\alpha}_i$ \\
         \hline
         \rule{0pt}{4ex}$\left\{ \frac{1}{2}, \frac{1}{2}, \frac{1}{2}, \frac{1}{2} \right\}$ & $\left\{ \frac{p^{i+1}-1}{2}, \frac{p^{i+1}-1}{2}, \frac{p^{i+1}-1}{2}, \frac{p^{i+1}-1}{2} \right\}$ \\[3mm]
         \hline
         \rule{0pt}{4ex}$\left\{ \frac{1}{2}, \frac{1}{2}, \frac{1}{3}, \frac{2}{3} \right\}$ & $\left\{ \bigr\lfloor \frac{p^{i+1}}{3} \bigr\rfloor, \frac{p^{i+1}-1}{2}, \frac{p^{i+1}-1}{2}, \bigr\lfloor \frac{2p^{i+1}}{3} \bigr\rfloor \right\}$ \\[3mm]
         \hline
         \rule{0pt}{4ex}$\left\{ \frac{1}{2}, \frac{1}{2}, \frac{1}{4}, \frac{3}{4} \right\}$ & $\left\{ \bigr\lfloor \frac{p^{i+1}}{4} \bigr\rfloor, \frac{p^{i+1}-1}{2}, \frac{p^{i+1}-1}{2}, \bigr\lfloor \frac{3p^{i+1}}{4} \bigr\rfloor \right\}$ \\[3mm]
         \hline
         \hline
         $\mathbf{\beta}$ & $\mathbf{\beta}_i$  \\
         \hline
         \rule{0pt}{4ex}$\left\{1, 1, \frac{4}{3}, \frac{2}{3} \right\}$ & $\left\{ \left[ \frac{p^{i+1}}{3} \right]-1, \left[ \frac{2p^{i+1}}{3} \right] - 1, p^{i+1}-1, p^{i+1}-1 \right\}$ \\[3mm]
         \hline
         \rule{0pt}{4ex}$\left\{1, 1, \frac{5}{4}, \frac{3}{4}\right\}$ & $\left\{ \left[ \frac{p^{i+1}}{4} \right]-1, \left[ \frac{3p^{i+1}}{4} \right] - 1, p^{i+1}-1, p^{i+1}-1 \right\}$ \\[3mm]
         \hline
         \rule{0pt}{4ex}$\left\{1, 1, \frac{7}{6}, \frac{5}{6} \right\}$ & $\left\{ \left[\frac{p^{i+1}}{6} \right] - 1, \left[ \frac{p^{i+1}}{6} \right] -1, p^{i+1}-1, p^{i+1}-1 \right\}$ \\[3mm]
         \hline
    \end{tabular}
    \caption{$p$-adic truncations for the negatives of the parameters appearing in our choices of $\mathbf{\alpha}$ and $\mathbf{\beta}$}
    \label{fig:negative-truncations}
\end{figure}~\par
From these calculations we conclude the following generalization of (\ref{eq:u-t-comparison}).
\begin{corollary}\label{cor:p-adic-valuation-jumps}
    Let $p \geq 7$ be prime and $i \geq 1$.  With notation as above, label $\mathbf{\alpha}_i = \left\{t_j^{(i)}\right\}_{j=1}^4$ and $\mathbf{\beta}_i = \left\{u_j^{(i)}\right\}_{j=1}^4$ such that
    \[
        t_1^{(i)} \leq t_2^{(i)} \leq t_3^{(i)} \leq t_4^{(i)} \quad \text{and} \quad u_1^{(i)} \leq u_2^{(i)} \leq u_3^{(i)} \leq u_4^{(i)}.
    \]
    Then
    \[
        p^i \leq u_1^{(i)} < t_1^{(i)} \leq t_2^{(i)} = t_3^{(i)} \leq t_4^{(i)} < u_2^{(i)} < u_3^{(i)} = u_4^{(i)} = p^{i+1}-1.
    \]
    In particular, for each $0 \leq k \leq p^i-1$ and each of our hypergeometric data, none of the Pochhammer symbols appearing in $H_{\mathbf{\alpha}, \mathbf{\beta}}(k)$ are divisible by $p^{i+1}$.
\end{corollary}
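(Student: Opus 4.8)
The plan is to deduce the entire chain directly from the closed-form expressions for $\mathbf{\alpha}_i$ and $\mathbf{\beta}_i$ recorded in Figure \ref{fig:negative-truncations}, which in turn come from Lemma \ref{lem:trunc-calc}. The first step is to read off the rigid portions of the chain that appear verbatim in the figure: in all six data the two middle $\alpha$-truncations satisfy $t_2^{(i)} = t_3^{(i)} = (p^{i+1}-1)/2$, and the two $\beta$-parameters equal to $1$ give $[-1]_i = p^{i+1}-1$, so $u_3^{(i)} = u_4^{(i)} = p^{i+1}-1$. The sortings $t_1^{(i)} \le t_2^{(i)}$, $t_3^{(i)} \le t_4^{(i)}$ and $u_1^{(i)} \le u_2^{(i)}$ then follow from the elementary comparisons $\lfloor p^{i+1}/b \rfloor \le (p^{i+1}-1)/2 \le \lfloor (b-1)p^{i+1}/b \rfloor$ for $b \in \{3,4\}$, valid for $p \ge 7$.

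The heart of the argument is the pair of strict inequalities $u_1^{(i)} < t_1^{(i)}$ and $t_4^{(i)} < u_2^{(i)}$, which are the level-$i$ analogues of (\ref{eq:u-t-comparison}). Conceptually these encode the fact, noted just before (\ref{eq:u-t-comparison}), that the two nontrivial $\beta$-parameters bracket all four $\alpha$-parameters, $q_1' < r_j' < q_2'$, and that this bracketing propagates to every truncation level. Concretely I would write each truncation from Lemma \ref{lem:trunc-calc} in the form $c\,p^{i+1} + O(1)$, where the \emph{governing fraction} $c = \lambda_{i+1}/b$ lies in $\{1/6,1/4,1/3,1/2,2/3,3/4,5/6\}$, and then check case by case that the governing fraction of $u_1^{(i)}$ is strictly below that of $t_1^{(i)}$, and that of $t_4^{(i)}$ strictly below that of $u_2^{(i)}$, with a gap of at least $1/12$ in each of the six data. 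Since $i \ge 1$ forces $p^{i+1} \ge 49$, the gap $\tfrac{1}{12}p^{i+1} > 4$ dominates the bounded correction coming from the floors and from the integer shifts introduced because $q_1,q_2$ are not both less than $1$ (cf. (\ref{eq:beta-dwork-dash-relabel})), so the strict inequalities follow. Evaluating the floors exactly requires splitting on the residue of $p$ modulo $b \in \{3,4,6\}$, since this determines $\lambda_{i+1}$ in Lemma \ref{lem:trunc-calc}, but the final comparison is uniform.

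For the remaining bound $p^i \le u_1^{(i)}$, note that the smallest $\beta$-truncation has governing fraction at least $1/6$, so $u_1^{(i)} \ge \tfrac{1}{6}p^{i+1} - O(1)$; one checks that $\tfrac{p^{i+1}-7}{6} \ge p^i$ is equivalent to $p^i(p-6) \ge 7$, which holds for all $p \ge 7$ and $i \ge 1$ (with equality at $p=7$, $i=1$ in the $\mathbf{\beta} = \{1,1,7/6,5/6\}$ case, so the bound is sharp). With the full chain in hand, every truncation $[-r_j]_i$ and $[-q_j]_i$---including the two copies of $p^{i+1}-1$---is at least $u_1^{(i)} \ge p^i$. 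The final assertion is then immediate from the proof of Lemma \ref{lemma:rising-factorial-valuation}: the first factor of a rising factorial $(a)_k$ divisible by $p^{i+1}$ occurs at index $[-a]_i$, and since $[-a]_i \ge p^i$ while $k \le p^i - 1$, every factor $a, a+1, \dots, a+k-1$ carries index strictly less than $p^i \le [-a]_i$. Hence no factor of any Pochhammer symbol appearing in $H_{\mathbf{\alpha},\mathbf{\beta}}(k)$ is divisible by $p^{i+1}$.

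I expect the main obstacle to be the two strict inequalities in the middle of the chain. Although the governing fractions are comfortably separated, making the inequalities true with room to spare, the genuine work lies in handling the floor and ceiling bookkeeping uniformly across all six hypergeometric data and all residue classes of $p$ modulo $3$, $4$, and $6$, while correctly tracking the unit shifts caused by the $\beta$-parameters exceeding $1$. Everything else reduces to bookkeeping layered on top of Figure \ref{fig:negative-truncations}.
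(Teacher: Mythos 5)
Your proposal is correct and takes essentially the same route as the paper: Corollary \ref{cor:p-adic-valuation-jumps} is obtained there by computing the truncations $[-r_j]_i$ and $[-q_j]_i$ explicitly via Lemma \ref{lem:trunc-calc} (the content of Figure \ref{fig:negative-truncations}) and reading the chain of inequalities off those closed forms, exactly as you do. Your explicit invocation of $i \geq 1$ (so that $\tfrac{1}{12}p^{i+1} \geq \tfrac{49}{12}$ dominates the bounded corrections) is in fact essential rather than cosmetic, since the corresponding level-zero strict inequality $t_4 < u_2$ degenerates to equality for small primes, e.g.\ $p = 7$ in the datum $\left(\tfrac{1}{2}, \tfrac{1}{3}, \tfrac{5}{4}\right)$, where $t_4 = u_2 = 4$.
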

The following proposition inductively finds the valuations of all $H_{\mathbf{\alpha}, \mathbf{\beta}}(k)$ for each $\mathrm{HD}_{(r_1, r_2, q)}$.  In particular it shows that both sides of the congruence in Theorem \ref{thm:dwork-cong} are $p$-adically integral and hence the congruence is well-defined.
\begin{prop}\label{prop:hypergeometric-coefficient-valuations}
    Let $p \geq 7$ be prime and $s \in \NN$.  For each $\mathrm{HD}_{(r_1, r_2, q)}$ appearing in Conjecture \ref{conj:long-conjectures}, we have $p^sH_{\alpha, \beta}(k) \in \ZZ_p$ for all $0 \leq k \leq p^s-1$.  In particular, $p^sF_s(\alpha, \beta) \in \ZZ_p$.
\end{prop}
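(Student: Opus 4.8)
The plan is to compute the $p$-adic valuation of each coefficient $H_{\alpha,\beta}(k)$ scale-by-scale and then to bound the number of scales at which the denominator out-weighs the numerator. Applying Lemma \ref{lemma:rising-factorial-valuation} to each of the eight Pochhammer symbols and collecting the $i$-th summands, I would write
\[
    v_p\big(H_{\alpha,\beta}(k)\big) = \sum_{i=1}^{\infty} D_i(k), \qquad D_i(k) \colonequals \sum_{j=1}^{4} c_i\big(t_j^{(i-1)}\big) - c_i\big(u_1^{(i-1)}\big) - c_i\big(u_2^{(i-1)}\big) - 2\Big\lfloor \tfrac{k}{p^i} \Big\rfloor,
\]
where $t_j^{(i-1)}, u_j^{(i-1)}$ are the sorted truncations of Corollary \ref{cor:p-adic-valuation-jumps} and $c_i(x) \colonequals \lfloor (k + p^i - x - 1)/p^i \rfloor$ counts the terms of the relevant rising factorial divisible by $p^i$. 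Writing $k = m_i p^i + r_i$ with $0 \le r_i \le p^i - 1$, each such count simplifies to $c_i(x) = m_i + \mathbf{1}[x < r_i]$ for $x \in [0, p^i - 1]$; since the two denominator factors $(1)_k$ contribute truncation $p^i-1$ (so $c_i(p^i-1) = m_i$), the $m_i$ terms cancel exactly ($4-1-1-2 = 0$) and $D_i(k)$ collapses to a difference of indicator functions of $r_i$.

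This is where the interlacing of Corollary \ref{cor:p-adic-valuation-jumps} does all the work. The inequalities $u_1^{(i-1)} < t_1^{(i-1)} \le t_2^{(i-1)} = t_3^{(i-1)} \le t_4^{(i-1)} < u_2^{(i-1)}$ let me pair the four numerator indicators against the denominator ones as $t_1 \leftrightarrow u_1$, $t_4 \leftrightarrow u_2$, and $t_2, t_3 \leftrightarrow (p^i-1)$. Every pair but the first is a difference $\mathbf{1}[t < r_i] - \mathbf{1}[u < r_i]$ with $t \le u$, hence nonnegative, while the first pair lies in $\{-1, 0\}$; thus $D_i(k) \ge -1$, with equality precisely when $r_i \in (u_1^{(i-1)}, t_1^{(i-1)}]$. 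To finish I would bound the number of scales at which this drop occurs. The case $s=0$ is immediate ($k=0$, $H_{\alpha,\beta}(0)=1$), so assume $s \ge 1$. For $i > s$ and $0 \le k \le p^s - 1$ we have $r_i = k$, and the lower bound $p^{i-1} \le u_1^{(i-1)}$ from Corollary \ref{cor:p-adic-valuation-jumps} gives $k \le p^s - 1 < p^{i-1} \le u_1^{(i-1)}$, whence $D_i(k) \ge 0$. Thus at most the $s$ scales $i = 1, \dots, s$ can contribute $-1$, so $v_p(H_{\alpha,\beta}(k)) \ge -s$, i.e. $p^s H_{\alpha,\beta}(k) \in \ZZ_p$. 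Summing over $0 \le k \le p^s - 1$ and using that $\ZZ_p$ is a ring then yields $p^s F_s(\alpha,\beta) \in \ZZ_p$.

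I expect the main obstacle to be the bookkeeping behind the uniform bound $D_i(k) \ge -1$: the pairing succeeds only because the precise shape recorded in Corollary \ref{cor:p-adic-valuation-jumps} holds simultaneously for all six data — in particular the strict gaps $u_1^{(i-1)} < t_1^{(i-1)}$ and $t_4^{(i-1)} < u_2^{(i-1)}$, the coincidences $t_2^{(i-1)} = t_3^{(i-1)}$ and $u_3^{(i-1)} = u_4^{(i-1)} = p^i - 1$, and the growth estimate $u_1^{(i-1)} \ge p^{i-1}$ that tames the tail. Since each $D_i(k)$ then takes values only in $\{-1, 0, 2, 3\}$, this argument in fact pins down $v_p(H_{\alpha,\beta}(k))$ exactly, matching the inductive determination of valuations alluded to in the statement; for the present proposition only the lower bound $\ge -s$ is required.
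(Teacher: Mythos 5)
Your proof is correct and takes essentially the same approach as the paper: both decompose $v_p(H_{\alpha,\beta}(k))$ scale-by-scale via Lemma \ref{lemma:rising-factorial-valuation} and use the interlacing $u_1^{(i-1)} < t_1^{(i-1)} \leq t_2^{(i-1)} = t_3^{(i-1)} \leq t_4^{(i-1)} < u_2^{(i-1)} < u_3^{(i-1)} = u_4^{(i-1)} = p^{i}-1$ to see that each power of $p$ can lower the valuation by at most $1$, your values $D_i(k) \in \left\{-1, 0, 2, 3\right\}$ being precisely the paper's per-scale correction table. The paper organizes this as an induction on $s$ (its remainder function $f$ and correction step are exactly your scale-$s$ term), while you sum over all scales at once and dispose of the tail $i > s$ explicitly via $u_1^{(i-1)} \geq p^{i-1}$; the only slip is notational --- for the scale $i=1$ the interlacing at index $0$ is (\ref{eq:u-t-comparison}) rather than Corollary \ref{cor:p-adic-valuation-jumps}, which is stated only for indices $\geq 1$.
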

\begin{proof}
    By the strong triangle inequality of the $p$-adic valuation we have
    \[
        v_p(F_s(\mathbf{\alpha}, \mathbf{\beta})) \geq \min_{0 \leq k \leq p^s-1} \left\{ v_p(H_{\mathbf{\alpha}, \mathbf{\beta}}(k)) \right\}.
    \]
    Therefore it suffices to show that, for all $0 \leq k \leq p^s-1$, we have $v_p(H(k)) \geq -s$. We induct on $s$.  For the base case $s = 1$,
    \[
        v_p(H(k)) = \sum_{i=1}^4 v_p((r_i)_k) - v_p((q_i)_k)
    \]
    and so by (\ref{eq:u-t-comparison}) and Lemma \ref{lemma:rising-factorial-valuation} we have
    \[
         v_p(H(k)) = 
        \begin{cases}
            0 & 0 \leq k \leq u_1 \\
            -1 & u_1 < k \leq t_1 \\
            0 & u_1 < k \leq t_2 \\
            2 & t_3 < k \leq t_4 \\
            3 & t_4 < k \leq u_2 \\
            2 & u_2 < k \leq p-1.
        \end{cases}
    \]
    This completes the base case. \par
    For the inductive step, fix $s \geq 1$ and assume that $v_p(H(k)) \geq -s$ for all $0 \leq k \leq p^s-1$.  Define a function $f: \NN \to \ZZ$ by setting $f(n) = v_p(H(r))$ where $r$ is the remainder of $n$ when dividing by $p^s$.  The function $f$ relates closely to the $p$-adic valuations of $H(k)$ for $k\geq p^s$.  In particular, it only differs from the actual valuations by failing to account fully for multiples of $p^{s+1}$ which appear in the rising factorials.  We correct for this using Corollary \ref{cor:p-adic-valuation-jumps} and Lemma \ref{lemma:rising-factorial-valuation}.  For example, when $k = u_1^{(s)}+1$, we introduce a multiple of $p^s$ to $(q_1)_k$ by Lemma \ref{lemma:rising-factorial-valuation}.  However, $f$ will only decrease by $s-1$ at this index.  So, for $u_1^{(s)} < k \leq t_1^{(s)}$, we have $v_p(H(k)) = f(k)-1$.  Continuing this reasoning we obtain
    \[
        v_p(H(k)) = 
        \begin{cases}
            f(k) & 0 \leq k \leq u_1^{(s)} \\
            f(k) - 1 & u_1^{(s)} < k \leq t_1^{(s)} \\
            f(k) & t_1^{(s)} < k \leq t_2^{(s)} \\
            f(k) + 2 & t_3^{(s)} < k \leq t_4^{(s)} \\
            f(k) + 3 & t_4^{(s)} < k \leq u_2^{(s)} \\
            f(k) + 2 & u_2^{(s)} < k \leq p^{s+1}_1.
        \end{cases}
    \]
    As $f(k) \geq -s$ for all $k$ by the inductive hypothesis, $v_p(F_{s+1}(\mathbf{\alpha}, \mathbf{\beta})) \geq -(s+1)$.
\end{proof}
The valuations of $pH(k)$ established for $0 \leq k \leq p-1$ in the base case will be of particular use, and so we record them separately in the following corollary for future reference.
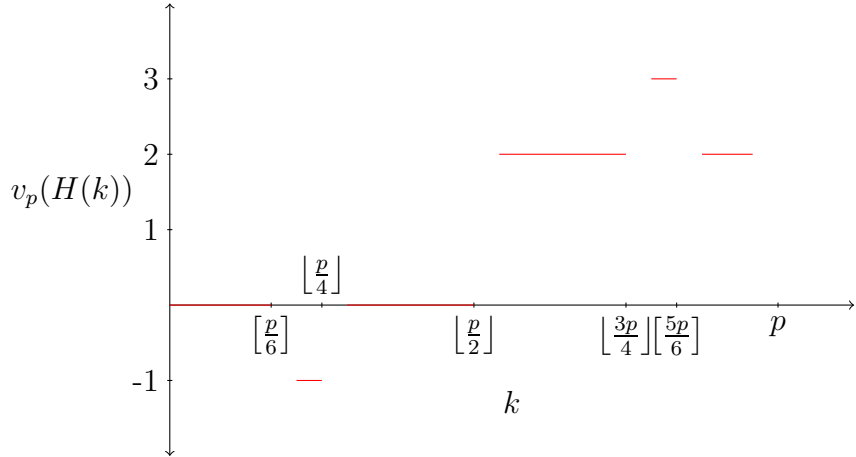
\begin{figure}[ht]\label{fig:hypergeometric-coefficient-valuations}
    \centering
    \begin{tikzpicture}[scale=.3333]
     \draw[->] (0,0)--(27,0);
     \draw[<->] (0,-6)--(0,12);
     \foreach \y in {-1, 1, 2, 3}
     {
        \draw (-.1,3*\y)--(.1,3*\y);
        \node[left] at (0,3*\y) {\y};
     }
     \foreach \x in {4, 6, 12, 18, 20, 24}{
        \draw (\x, -.1)--(\x,.1);
     }
     \node[below] at (4, 0) {$\left[ \frac{p}{6} \right]$};
     \node[above] at (6, 0) {$\left\lfloor \frac{p}{4} \right\rfloor$};
     \node[below] at (12, 0) {$\left\lfloor \frac{p}{2} \right\rfloor$};
     \node[below] at (18, 0) {$\left\lfloor \frac{3p}{4} \right\rfloor$};
     \node[below] at (20,0) {$\left[ \frac{5p}{6} \right]$};
     \node[below] at (24, 0) {$p$};
     \draw[red] (0,0)--(4,0);
     \draw[red] (5,-3)--(6,-3);
     \draw[red] (7,0)--(12,0);
     \draw[red] (13,6)--(18,6);
     \draw[red] (19,9)--(20,9);
     \draw[red] (21,6)--(23,6);
     \node[below] at (13.5, -3) {$k$};
     \node[left] at (-1, 4.5) {$v_p(H(k))$};
    \end{tikzpicture}
    \caption{The $p$-adic valuations of the hypergeometric coefficients $H_{\alpha, \beta}(k)$ corresponding to the hypergeometric datum $(1/2, 1/4, 7/6)$.  By Corollary \ref{cor:hypergeometric-valuations-through-p}, this graph is representative of the shape of hypergeometric valuations for each of our hypergeometric series.}
\end{figure}
\begin{corollary}\label{cor:hypergeometric-valuations-through-p}
    For each $\mathrm{HD}_{(r_1, r_2, q)}$, prime $p \geq 7$, and $0 \leq k \leq p-1$, we have
    \[
        v_p(pH_{\mathbf{\alpha}, \mathbf{\beta}}(k)) = 
        \begin{cases}
            1 & 0 \leq k \leq u_1 \\
            0 & u_1 < k \leq t_1 \\
            1 & u_1 < k \leq t_2 \\
            3 & t_3 < k \leq t_4 \\
            4 & t_4 < k \leq u_2 \\
            3 & u_2 < k \leq p-1.
        \end{cases}
    \]
    In particular, $pH(k) \equiv 0 \pmod{p^3}$ for all $t_3 < k \leq p-1$.
\end{corollary}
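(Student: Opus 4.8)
The plan is to deduce the corollary directly from the case $s=1$ of Proposition \ref{prop:hypergeometric-coefficient-valuations}, using only that $v_p(p) = 1$. Since $v_p(pH(k)) = 1 + v_p(H(k))$, it suffices to compute $v_p(H(k))$ for $0 \le k \le p-1$ and add $1$ to each region's valuation; I would reproduce the base-case computation here so that the statement is self-contained.

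First I would expand
\[
    v_p(H(k)) = \sum_{j=1}^4 v_p((r_j)_k) - \sum_{j=1}^4 v_p((q_j)_k)
\]
and apply Lemma \ref{lemma:rising-factorial-valuation} to each Pochhammer symbol. The key simplification is that for $0 \le k \le p-1$ the case $i=1$ of Corollary \ref{cor:p-adic-valuation-jumps} guarantees that no Pochhammer symbol appearing in $H(k)$ is divisible by $p^2$, so each individual valuation $v_p((a)_k)$ is either $0$ or $1$. In this range the sum in Lemma \ref{lemma:rising-factorial-valuation} collapses to its $i=1$ term, and $v_p((a)_k) = 1$ precisely when $k > [-a]_0$. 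Recalling the notation $t_j = [-r_j]_0$ and $u_j = [-q_j]_0$ from (\ref{eq:ts-us-defn}), this says $v_p((r_j)_k)$ jumps from $0$ to $1$ as $k$ passes $t_j$, and $v_p((q_j)_k)$ jumps as $k$ passes $u_j$.

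The remaining step is bookkeeping. I would use the chain of inequalities (\ref{eq:u-t-comparison}),
\[
    u_1 < t_1 \le t_2 = t_3 \le t_4 < u_2 < u_3 = u_4 = p-1,
\]
to walk through the six resulting intervals of $k$, at each threshold adding $+1$ for a numerator factor crossed and $-1$ for a denominator factor crossed. Note that the two factors of $k! = (1)_k$ in the denominator never contribute on this range, since their threshold is $u_3 = u_4 = p-1$ and we only consider $k \le p-1$. Starting from $v_p(H(0)) = 0$, crossing $u_1$ gives $-1$, crossing $t_1$ returns to $0$, crossing $t_2 = t_3$ (two simultaneous numerator factors) gives $+2$, crossing $t_4$ gives $+3$, and crossing $u_2$ returns to $+2$; adding $1$ throughout yields exactly the claimed table for $v_p(pH(k))$.

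There is no serious obstacle here: the only point requiring care is justifying that each Pochhammer symbol contributes at most $1$ to the valuation, which is precisely the content of Corollary \ref{cor:p-adic-valuation-jumps} in the case $i = 1$; the rest is the interval-by-interval accounting dictated by (\ref{eq:u-t-comparison}). Finally, for the ``in particular'' statement, I would read off that on the three intervals $t_3 < k \le t_4$, $t_4 < k \le u_2$, and $u_2 < k \le p-1$ the valuation of $pH(k)$ equals $3$, $4$, and $3$ respectively, hence is $\ge 3$ throughout, giving $pH(k) \equiv 0 \pmod{p^3}$ for all $t_3 < k \le p-1$.
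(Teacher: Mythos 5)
Your proposal is correct and takes essentially the same route as the paper: the corollary there is simply the base case $s=1$ of Proposition \ref{prop:hypergeometric-coefficient-valuations} shifted by $v_p(p)=1$, and that base case is computed exactly as you do, by expanding $v_p(H(k))$ into Pochhammer valuations via Lemma \ref{lemma:rising-factorial-valuation} and reading off the jumps at the thresholds ordered by (\ref{eq:u-t-comparison}). Your explicit appeal to Corollary \ref{cor:p-adic-valuation-jumps} with $i=1$, to justify that each Pochhammer symbol contributes at most one factor of $p$ so the valuation sum collapses to its first term, is a detail the paper leaves implicit but is the same underlying argument.
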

%
%-----------------------THE-PROOF-------------------------------------------------
%
\section{Proof of Theorem \ref{thm:dwork-cong}}\label{sec:proofs}
We now turn our attention to proving Theorem \ref{thm:dwork-cong}, beginning by reinterpreting the supercongruences $p$-adically.  For the remainder of the paper, we fix a prime $p \geq 7$ and $s \in \NN$.  Using the Dwork dash operation, Long, Tu, Yui, and Zudilin \cite{LTYZ} give a formula for rewriting a quotient of shifted factorials in terms of $\Gamma_p(a)$, which we extend from $\ZZ_p^\times$ to all of $\ZZ_p$ in the following Lemma.
\begin{lemma}\label{lem:shift-fact-to-p-gamma}
    Let $k \in \ZZ_{\geq 0}$, $a = [k]_0$, and $b = (k-a)/p$, or $k=a+bp$.  Then for any $r \in \ZZ_p$,
    \[
        \frac{(r)_k}{(1)_k} = \frac{-\Gamma_p(r+k)}{\Gamma_p(1+k)\Gamma_p(r)}\frac{(r')_b}{(1)_b}((r'+b)p)^{\nu(a,[-r]_0)},
    \]
    where
    \[
        \nu(a,x) = -\biggr\lfloor \frac{x-a}{p-1} \biggr\rfloor = \begin{cases} 0 & \text{if } a \leq x, \\ 1 & \text{if } x < a < p. \end{cases}
    \]
\end{lemma}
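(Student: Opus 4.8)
The plan is to evaluate $(r)_k$ and $(1)_k = k!$ separately by telescoping the $\Gamma_p$ functional equation from Lemma~\ref{lem:p-gamma-properties} and peeling off the factors divisible by $p$. First I would telescope the ratio $\Gamma_p(r+k)/\Gamma_p(r) = \prod_{j=0}^{k-1}\Gamma_p(r+j+1)/\Gamma_p(r+j)$ and apply the functional equation to each factor (which contributes $-(r+j)$ when $p \nmid r+j$ and $-1$ when $p \mid r+j$) to obtain
\[
    \frac{\Gamma_p(r+k)}{\Gamma_p(r)} = (-1)^k \prod_{\substack{0 \le j < k \\ p \nmid r+j}}(r+j).
\]
Comparing this against the full product $(r)_k = \prod_{j=0}^{k-1}(r+j)$ isolates the $p$-divisible factors and gives
\[
    (r)_k = (-1)^k \frac{\Gamma_p(r+k)}{\Gamma_p(r)} \prod_{\substack{0 \le j < k \\ p \mid r+j}}(r+j).
\]
Crucially, this step uses only the functional equation and so makes no assumption that $r \in \ZZ_p^\times$; the case $p \mid r+j$ is handled directly, including $j=0$ when $r \in p\ZZ_p$. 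This is exactly what delivers the advertised extension from $\ZZ_p^\times$ to all of $\ZZ_p$.

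Next I would identify the product over the $p$-divisible indices. These are precisely $j = [-r]_0 + mp$ for $m \ge 0$, and the definition of the Dwork dash operation gives $r + [-r]_0 + mp = p(r'+m)$, so each such factor equals $p(r'+m)$. The heart of the argument is counting how many $m \ge 0$ satisfy $[-r]_0 + mp \le k-1$. Writing $k = a + bp$ with $a = [k]_0$ and $x = [-r]_0$, the inequality becomes $m \le b + (a-1-x)/p$, and since $a,x \in \{0,\dots,p-1\}$ the floor of the right-hand side is $b-1$ when $a \le x$ and $b$ when $x < a$; thus the count is $b + \nu(a,x)$ with $\nu$ as in the statement. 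Hence
\[
    \prod_{\substack{0\le j<k\\ p\mid r+j}}(r+j) = p^{b+\nu(a,x)}\,(r')_b\,(r'+b)^{\nu(a,x)},
\]
the extra factor $(r'+b)$ being present exactly when $\nu = 1$. This counting step is the only real obstacle; everything else is bookkeeping.

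Finally I would specialize to $r=1$. Here $1' = 1$ and $[-1]_0 = p-1 \ge a$, so $\nu$ vanishes and the computation above yields $(1)_k = (-1)^k \Gamma_p(1+k)\Gamma_p(1)^{-1} p^b (1)_b$. Dividing $(r)_k$ by $(1)_k$, the signs $(-1)^k$ and the common power $p^b$ cancel, leaving
\[
    \frac{(r)_k}{(1)_k} = \frac{\Gamma_p(r+k)\,\Gamma_p(1)}{\Gamma_p(r)\,\Gamma_p(1+k)} \cdot \frac{(r')_b}{(1)_b}\cdot \big((r'+b)p\big)^{\nu(a,[-r]_0)}.
\]
Substituting $\Gamma_p(1) = -1$ (immediate from the definition in (\ref{eq:gamma-to-p-gamma})) turns the leading fraction into $-\Gamma_p(r+k)/\big(\Gamma_p(1+k)\Gamma_p(r)\big)$, which is precisely the claimed identity.
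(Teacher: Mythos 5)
Your proof is correct, and it takes a genuinely different route from the paper's. The paper proves only the case $r \in p\ZZ_p$, citing Long--Tu--Yui--Zudilin for $r \in \ZZ_p^\times$; there, writing $r = ph$, it passes through the classical $\Gamma$-function, expressing $(r)_k/(1)_k = \Gamma(r+k)/(\Gamma(1+k)\Gamma(r))$ and converting each factor to $\Gamma_p$ via (\ref{eq:gamma-to-p-gamma}), with a case split on $a=0$ versus $a>0$ to track the floor terms. You instead stay entirely on the $p$-adic side: telescoping the functional equation of Lemma \ref{lem:p-gamma-properties} expresses $\Gamma_p(r+k)/\Gamma_p(r)$ as $(-1)^k$ times the product of the prime-to-$p$ factors of $(r)_k$, so the lemma reduces to identifying the $p$-divisible factors $r+[-r]_0+mp = p(r'+m)$ and counting them, which is exactly where $\nu(a,[-r]_0)$ and the stray factor $(r'+b)p$ come from; specializing to $r=1$ (where $1'=1$ and $\nu$ vanishes) and dividing finishes it. Your counting step checks out ($\lfloor (a-1-x)/p \rfloor$ equals $-1$ or $0$ according as $a \le x$ or $x < a$), as does $\Gamma_p(1)=-1$. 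What your approach buys: it is uniform in $r$ --- the two branches of the functional equation absorb the unit/non-unit dichotomy, so you never split into cases and you re-derive the LTYZ case rather than quoting it --- and it is purely $p$-adic, avoiding the slightly delicate point in the paper's argument of applying the integer-valued identity (\ref{eq:gamma-to-p-gamma}) to the $p$-adic argument $r = ph$, which strictly speaking requires a continuity or integer-specialization justification. What the paper's route buys is brevity: by outsourcing the unit case, only the genuinely new case $r \in p\ZZ_p$ needs an argument, and the classical-$\Gamma$ bookkeeping there is short.
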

\begin{proof}
    The case where $r \in \ZZ_p^\times$ is proved by Long, Tu, Yui, and Zudilin \cite{LTYZ}, so we assume $r \in p\ZZ_p$.  Let $h \in \ZZ_p$ be such that $r = ph$.  We note that $[-r]_0 = 0$ and hence $r' = h$.  Then, using (\ref{eq:gamma-to-p-gamma}), we compute
    \[
        \frac{(r)_k}{(1)_k} = \frac{\Gamma(r+k)}{\Gamma(1+k)\Gamma(r)} = \frac{-\Gamma_p(r+k)}{\Gamma_p(1+k)\Gamma_p(r)} \frac{\lfloor (r+k-1)/p \rfloor !}{\lfloor k/p \rfloor ! \lfloor (r-1)/p \rfloor !} p^{\lfloor(r+k-1)/p \rfloor - \lfloor k/p \rfloor - \lfloor (r-1)/p \rfloor}.
    \]
    As $r = ph$ and $k = a + bp$, it follows that $r+k = a+p(b+h)$ and
    \[
        \biggr\lfloor \frac{r+k-1}{p} \biggr\rfloor = h+b + 
        \begin{cases}
            -1 & a = 0 \\
            0 & a > 0.
        \end{cases}
    \]
    Additionally, $\lfloor k/p \rfloor = b$ and $\lfloor (r-1)/p \rfloor = h-1$.  Thus, if $a > [-r]_0 = 0$,
    \begin{align*}
        \frac{(r)_k}{(1)_k}\left(\frac{-\Gamma_p(1+k)\Gamma_p(r)}{\Gamma_p(r+k)}\right) &= \frac{(h+b)!}{b!(h-1)!}p^{(h+b)-b-(h-1)} \\
        &= \frac{(h+b)(h)_b}{(1)_b}p \\
        &= \frac{(r')_b}{(1)_b}(r'+b)p.
    \end{align*}
    And if $a = [r]_0 = 0$,
    \[
        \frac{(r)_k}{(1)_k}\left(\frac{-\Gamma_p(1+k)\Gamma_p(r)}{\Gamma_p(r+k)}\right) = \frac{(h+b-1)!}{b!(h-1)!}p^{(h+b-1)-b-(h-1)} = \frac{(r')_b}{(1)_b}.
    \]
    In each case,
    \[
        \frac{(r)_k}{(1)_k} = \frac{-\Gamma_p(r+k)}{\Gamma_p(1+k)\Gamma_p(r)} \frac{(r')_b}{(1)_b} \left((r'+b)p\right)^{\nu(a,[-r]_0)},
    \]
    as was to be shown.
\end{proof}
\noindent By applying Lemma \ref{lem:shift-fact-to-p-gamma} twice to
\[
    \frac{(r)_k}{(q)_k} = \frac{(r)_k}{(1)_k} \frac{(1)_k}{(q)_k},
\]
we obtain the following.
\begin{corollary}\label{cor:shift-quotient}
    Let $k, a, b$, and $\nu$ be defined as in Lemma \ref{lem:shift-fact-to-p-gamma}.  Then for any $r,q \in \ZZ_p$,
    \[
        \frac{(r)_k}{(q)_k} = \frac{\Gamma_p(r+k)\Gamma_p(q)}{\Gamma_p(q+k)\Gamma_p(r)} \frac{(r')_b}{(q')_b} \frac{(r'+b)^{\nu(a,[-r]_0)}}{(q'+b)^{\nu(a,[-q]_0)}} \cdot p^{\nu(a,[-r]_0)-\nu(a,[-q]_0)}.
    \]
\end{corollary}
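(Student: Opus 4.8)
The plan is to exploit the factorization $\frac{(r)_k}{(q)_k} = \frac{(r)_k}{(1)_k}\cdot\frac{(1)_k}{(q)_k}$ and to apply Lemma \ref{lem:shift-fact-to-p-gamma} to each of the two quotients of the shape $(\,\cdot\,)_k/(1)_k$ that appear. Concretely, I would first invoke the lemma with parameter $r$ to write
\[
    \frac{(r)_k}{(1)_k} = \frac{-\Gamma_p(r+k)}{\Gamma_p(1+k)\Gamma_p(r)}\frac{(r')_b}{(1)_b}\bigl((r'+b)p\bigr)^{\nu(a,[-r]_0)},
\]
and then invoke it a second time with $q$ in place of $r$ to record the analogous expression for $(q)_k/(1)_k$. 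Since the hypotheses on $k$, $a$, $b$, and $\nu$ in the corollary are exactly those of the lemma, both applications are immediate with no further verification.

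Next I would divide the first identity by the second, rather than multiply by a reciprocal, so as to keep the bookkeeping transparent. In the resulting quotient three separate cancellations occur: the two occurrences of $\Gamma_p(1+k)$ cancel and the two $-1$ signs cancel, leaving $\frac{\Gamma_p(r+k)\Gamma_p(q)}{\Gamma_p(q+k)\Gamma_p(r)}$; the two copies of $(1)_b$ cancel, leaving $\frac{(r')_b}{(q')_b}$; and the factors $\bigl((r'+b)p\bigr)^{\nu(a,[-r]_0)}$ and $\bigl((q'+b)p\bigr)^{\nu(a,[-q]_0)}$ combine, after separating the powers of $p$ from the remaining factors, into $\frac{(r'+b)^{\nu(a,[-r]_0)}}{(q'+b)^{\nu(a,[-q]_0)}}\,p^{\nu(a,[-r]_0)-\nu(a,[-q]_0)}$. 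Assembling these three pieces reproduces the claimed formula.

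There is no substantive obstacle here; all of the content is already contained in Lemma \ref{lem:shift-fact-to-p-gamma}, and the corollary is a purely formal consequence. The only points requiring minor care are the sign, where one must confirm that the two $-1$'s introduced by the two applications of the lemma genuinely cancel rather than compound, and the exponent arithmetic on $p$, where the factor coming from the $q$-application sits in the denominator and so contributes its exponent with a negative sign, producing the net power $\nu(a,[-r]_0)-\nu(a,[-q]_0)$. I would also remark that the identity is to be understood as an equality of the $\Gamma_p$-expressions, which are always well-defined since $\Gamma_p$ never vanishes, so the intermediate division by $(q)_k/(1)_k$ presents no genuine difficulty.
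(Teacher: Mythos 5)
Your proposal is correct and follows exactly the paper's own route: the paper likewise obtains the corollary by applying Lemma \ref{lem:shift-fact-to-p-gamma} twice to the factorization $\frac{(r)_k}{(q)_k} = \frac{(r)_k}{(1)_k}\cdot\frac{(1)_k}{(q)_k}$ and cancelling the common factors. The sign cancellation and the bookkeeping of the powers of $p$ that you flag are precisely the (routine) content of the deduction.
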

 In the particular case where $b=0$ and thus $k=a$, we have
\[
    \frac{\Gamma_p(r+a)\Gamma_p(q)}{\Gamma_p(q+a)\Gamma_p(r)} = \frac{(r)_a}{(q)_a} \frac{(q')^{\nu(a,[-q]_0)}}{(r')^{\nu(a,[-r]_0)}} \cdot p^{\nu(a,[-q]_0)-\nu(a,[-r]_0)}.
\]
And so for general $k=a+bp$,
\begin{singnumalign}
    \frac{(r)_{a+bp}}{(q)_{a+bp}} &= \frac{\Gamma_p(r+a)\Gamma_p(q)}{\Gamma_p(q+a)\Gamma_p(r)} \frac{(r')_b}{(q')_b} \frac{(r'+b)^{\nu(a,[-r]_0)}} {(q'+b)^{\nu(a,[-q]_0)}} \\
    & \hspace{10mm} \times p^{\nu(a,[-r]_0)-\nu(a,[-q]_0)} \frac{\Gamma_p(q+a)\Gamma_p(r+k)}{\Gamma_p(r+a)\Gamma_p(q+k)} \\
    &= \frac{(r)_a}{(q)_a}\frac{(r')_b}{(q')_b} \left(1+\frac{b}{r'}\right)^{\nu(a,[-r]_0)} \left( 1+\frac{b}{q'}\right)^{-\nu(a,[-q]_0)} \\
    &\hspace{30mm} \times \frac{\Gamma_p(q+a)\Gamma_p((r+a)+bp)}{\Gamma_p(r+a)\Gamma_p((q+a)+bp)}. \label{eq:shift-fact-reduction}
\end{singnumalign}
We collect terms in (\ref{eq:shift-fact-reduction}) by defining
\[
    \Lambda_{\mathbf{\alpha},\mathbf{\beta}}(a+bp) \colonequals \prod_{j=1}^4 \left(1+\frac{b}{r_j'}\right)^{\nu(a,t_j)}\left(1+\frac{b}{q_j'}\right)^{-\nu(a,u_j)},
\]
where $t_j$ and $u_j$ are defined as in (\ref{eq:ts-us-defn}).  Using Theorem \ref{thm:p-gamma-approx}, we compute
\begin{singnumalign}\label{eq:r-product}
    \prod_{j=1}^4 \Gamma_p((r_j+a)+bp) &\equiv \prod_{j=1}^4 \Gamma_p(r_j+a) \Biggr( 1+bp\left(\sum_{i=1}^4G_1(r_i+a)\right) \\
    + (bp)^2 \biggr[\frac{1}{2} \sum_{i=1}^4 &G_2(r_i+a)+
    \!\!\!\! \sum_{1\leq i < j \leq 4}\!\!\!\! G_1(r_i+a)G_1(r_j+a)\biggr] \Biggr) \pmod{p^3}.
\end{singnumalign}
Similarly,
\begin{singnumalign}\label{eq:q-product}
    \prod_{j=1}^4\Gamma_p((q_j+a)+bp) &\equiv \prod_{j=1}^4 \Gamma_p(q_j+a) \Biggr( 1+bp\left(\sum_{i=1}^4 G_1(q_i+a)\right) \\
    +(bp)^2\biggr[ \frac{1}{2}\sum_{i=1}^4 &G_2(q_i+a) + \!\!\!\! \sum_{1 \leq i < j \leq 4}\!\!\!\! G_1(q_i+a)G_1(q_j+a) \biggr]\Biggr) \pmod{p^3}.
\end{singnumalign}
Taking the quotient of (\ref{eq:r-product}) and (\ref{eq:q-product}) we have
\begin{equation}\label{eq:p-adic-hypergeometric-coefficient}
    \prod_{j=1}^{4} \frac{\Gamma_p((r_j+a)+bp)}{\Gamma_p((q_j+a)+bp)} \equiv \prod_{j=1}^{4} \frac{\Gamma_p(r_j+a)}{\Gamma_p(q_j+a)}(1+J_1(a) bp+J_2(a)(bp)^2) \pmod{p^3},
\end{equation}
where $J_1 : \ZZ_p \to \ZZ_p$ and $J_2: \ZZ_p \to \ZZ_p$ are defined by
\[
    J_1(a) \colonequals J_1(a;\mathbf{\alpha},\mathbf{\beta}) \colonequals \sum_{j=1}^{4} \left(G_1(r_j+a)-G_1(q_j+a)\right)
\]
and
\begin{align*}
    J_2(a) \colonequals J_2(a;\mathbf{\mathbf{\alpha}},\mathbf{\mathbf{\beta}}) \colonequals& \frac{1}{2}\sum_{j=1}^{4} \left[G_2(r_j+a)-G_2(q_j+a)\right]  \\
    &+ \sum_{i=1}^4 G_1(q_i+a) \sum_{j=1}^{4} \left[G_1(q_j+a)-G_1(r_j+a)\right] \\
    &+ \!\!\!\!\sum_{1\leq i<j \leq {4}}\!\!\!\! \left[ G_1(r_i+a)G_1(r_j+a)-G_1(q_i+a)G_1(q_j+a)\right].
\end{align*}
When $\mathbf{\beta} = \left\{1, 1, 1, 1\right\}$, these definitions reduce to the corresponding functions $J_1(a; \alpha)$ and $J_2(a; \alpha)$ considered by Long, Tu, Yui, and Zudilin \cite{LTYZ}. \par
Applying Corollary \ref{cor:shift-quotient} and (\ref{eq:p-adic-hypergeometric-coefficient}) to $p^{s+1}F_{s+1}(\mathbf{\alpha}, \mathbf{\beta})$, we find
\begin{singnumalign}
    p^{s+1}F_{s+1}(\mathbf{\alpha},\mathbf{\beta}) &= p^{s+1}\sum_{a=0}^{p-1}\sum_{b=0}^{p^s-1} \prod_{j=1}^{4} \frac{(r_j)_{a+bp}}{(q_j)_{a+bp}} \\
    &= p^{s+1}\sum_{a=0}^{p-1} \sum_{b=0}^{p^s-1} \prod_{j=1}^{4} \biggr[\frac{(r_j)_a}{(q_j)_a}\frac{(r_j')_b}{(q_j')_b} \left(1+\frac{b}{r_j'}\right)^{\nu(a,t_j)}  \\
    &\hspace{15mm}\times \left(1+\frac{b}{q_j'}\right)^{-\nu(a,u_j)} \frac{\Gamma_p(q_j+a)\Gamma_p((r_j+a)+bp)}{\Gamma_p(r_j+a)\Gamma_p((q_j+a)+bp)}\biggr] \\
    &\equiv p^{s+1}\sum_{b=0}^{p^s-1}\prod_{j=1}^{4} \frac{(r_j')_b}{(q_j')_b} \sum_{a=0}^{p-1} \prod_{j=1}^{4} \frac{(r_j)_a}{(q_j)_a} \\
    & \hspace{4mm} \times \Lambda_{\mathbf{\alpha},\mathbf{\beta}}(a+bp) \left(1+J_1(a)bp+J_2(a)(bp)^2\right) \pmod{p^3}.
\end{singnumalign}
Recall that, for each of our hypergeometric data $\mathrm{HD}_{(r_1, r_2, q)}$, $\mathbf{\alpha}$ is closed under the Dwork dash operation, so $\prod_{j=1}^4 (r_j')_b = \prod_{j=1}^4 (r_j)_b$.
However, $\mathbf{\beta}$ is not closed under the dash operation.  Instead, by (\ref{eq:beta-dwork-dash-relabel})
\begin{singnumalign}\label{eq:beta-dwork-dash-full}
    \prod_{j=1}^4 (q_j')_b = (1)_b(1)_b(q_1')_b(q_2')_b &= (1)_b(1)_b\left[(q_1')(q_1'+1)_b/(q_1'+b)\right](q_2')_b \\
    &= \frac{1}{\sfrac{b}{q_1'}+1} (1)_b(1)_b(q_1)_b(q_2)_b = \frac{1}{\sfrac{b}{q_1'}+1}\prod_{j=1}^4 (q_j)_b.
\end{singnumalign}
Hence,
\begin{singnumalign}\label{eq:hyper-after-dwork-dash}
    p^{s+1}F_{s+1}(\mathbf{\alpha},\mathbf{\beta}) &\equiv p^{s+1} \sum_{b=0}^{p^s-1} \prod_{j=1}^4 \left(\frac{b}{q_1'}+1\right) \frac{(r_j)_b}{(q_j)_b}  \sum_{a=0}^{p-1} \frac{(r_j)_a}{(q_j)_a} \\
    &\hspace{5mm} \times \left[\Lambda_{\mathbf{\alpha},\mathbf{\beta}}(a+bp)(1+J_1(a)bp+J_2(a)(bp)^2 )\right] \pmod{p^3}.
\end{singnumalign}
Pulling the $\sfrac{b}{q_1'}+1$ term inside the inner sum along with one multiple of $p$, we have
\begin{singnumalign}\label{eq:hyper-full-redux}
    p^{s+1}F_{s+1}(\mathbf{\alpha},\mathbf{\beta}) &\equiv p^{s} \sum_{b=0}^{p^s-1} \prod_{j=1}^4 \frac{(r_j)_b}{(q_j)_b}  \sum_{a=0}^{p-1} p\prod_{j=1}^4 \frac{(r_j)_a}{(q_j)_a} \left(\frac{b}{q_1'}+1\right) \\
    &\hspace{5mm} \times\left[\Lambda_{\mathbf{\alpha},\mathbf{\beta}}(a+bp)(1+J_1(a)bp+J_2(a)(bp)^2 )\right] \pmod{p^3}.
\end{singnumalign}
Therefore, Theorem \ref{thm:dwork-cong} reduces to the following congruence
\begin{singnumalign}\label{eq:dwork-reduction}
    \sum_{b=0}^{p^s-1} p^s \prod_{j=1}^4 \frac{(r_j)_b}{(q_j)_b} \sum_{a=0}^{p-1} p\prod_{j=1}^4 &\frac{(r_j)_a}{(q_j)_a}\biggr[ \left(\frac{b}{q_1'}+1\right)\Lambda_{\mathbf{\alpha},\mathbf{\beta}} (a+bp) \\
    &\times \left(1 + J_1(a)bp + J_2(a) (bp)^2 \right) - 1 \biggr] \equiv 0 \pmod{p^3}.
\end{singnumalign}
In fact, we prove the stronger condition that each term of the sum indexed by $b$ in (\ref{eq:dwork-reduction}) vanishes modulo $p^3$.  Define the inner sum $I(b)$ at $b \in \left\{0, 1, \hdots, p^s-1\right\}$ by
\begin{equation}\label{eq:inn-sum-def}
    I(b) \colonequals \sum_{a=0}^{p-1} p\prod_{j=1}^4 \frac{(r_j)_a}{(q_j)_a} \left[ \left(\frac{b}{q_1'}+1\right) \Lambda(a+bp)(1+J_1(a)bp+J_2(a)(bp)^2 ) - 1\right].
\end{equation}
With this notation and by (\ref{eq:dwork-reduction}), to prove Theorem \ref{thm:dwork-cong} it suffices to show that, for each $s \geq 0$ and $0 \leq b \leq p^s-1$,
\begin{equation}\label{eq:inner-sum-suficient-congruence}
    p^s \prod_{j=1}^4 \frac{(r_j)_b}{(q_j)_b} I(b) \equiv 0 \pmod{p^3}.
\end{equation}
To prove this, we first define $C_1$ and $C_2$ as
\begin{singnumalign}\label{eq:C-def}
    C_1 =& p \biggr[ \sum_{a=0}^{u_1} \prod_{j=1}^4 \frac{(r_j)_a}{(q_j)_a}\left(pJ_1(a)+\frac{1}{q_1'}\right) + \sum_{a= u_1+1}^{t_1} p \prod_{j=1}^4  \frac{(r_j)_a}{(q_j)_a} J_1(a) \\
    &+ \sum_{a = t_1+1}^{t_2} \prod_{j=1}^4 \frac{(r_j)_a}{(q_j)_a} \left(J_1(a)p + \frac{1}{r_1'}\right) \biggr]\\
    C_2 =& p^2 \biggr[ \sum_{a=0}^{u_1} \prod_{j=1}^4 \frac{(r_j)_a}{(q_j)_a} \frac{J_1(a)}{q_1'} + \sum_{a = u_1+1}^{t_1} p \prod_{j=1}^4 \frac{(r_j)_a}{(q_j)_a} J_2(a) \\
    &+ \sum_{a = t_1+1}^{t_2} \prod_{j=1}^4 \frac{(r_j)_a}{(q_j)_a}  \frac{J_1(a)}{r_1'} \biggr].
\end{singnumalign}
We aim to show that both of these quantities vanish modulo $p^3$.  By way of the following proposition, this will then prove (\ref{eq:inner-sum-suficient-congruence}) and hence Theorem \ref{thm:dwork-cong}.
\begin{prop}\label{prop:b-redux}
    Let $C_1, C_2$ be as in (\ref{eq:C-def}) and $I(b)$ as in (\ref{eq:inn-sum-def}).  For each $b$ with $0 \leq b \leq p^s-1$, if 
    \[
        C_1b + C_2b^2 \equiv 0 \pmod{p^3},
    \]
    then
    \[
        p^s \prod_{j=1}^4 \frac{(r_j)_b}{(q_j)_b} I(b) \equiv 0 \pmod{p^3}.
    \]
\end{prop}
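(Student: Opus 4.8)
The plan is to establish the implication by expanding $I(b)$ so that the combination $C_1 b + C_2 b^2$ appears explicitly, and then showing that everything left over is annihilated by the outer factor $p^s\prod_{j=1}^4 (r_j)_b/(q_j)_b$ modulo $p^3$. Abbreviating $H(b) = \prod_{j=1}^4 (r_j)_b/(q_j)_b$, note that $p^s H(b) \in \ZZ_p$ by Proposition \ref{prop:hypergeometric-coefficient-valuations}, so the hypothesis $C_1 b + C_2 b^2 \equiv 0 \pmod{p^3}$ already gives $p^s H(b)(C_1 b + C_2 b^2) \equiv 0 \pmod{p^3}$; the real content is therefore to control the remainder $R(b) \colonequals I(b) - C_1 b - C_2 b^2$ and prove $p^s H(b) R(b) \equiv 0 \pmod{p^3}$.

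First I would split the $a$-sum in (\ref{eq:inn-sum-def}) along the six intervals of (\ref{eq:u-t-comparison}), namely $[0,u_1]$, $(u_1,t_1]$, $(t_1,t_2]$, $(t_2,t_4]$, $(t_4,u_2]$, and $(u_2,p-1]$. On each interval the exponents $\nu(a,t_j)$ and $\nu(a,u_j)$ are constant, so $\Lambda_{\mathbf{\alpha},\mathbf{\beta}}(a+bp)$ collapses to an explicit monomial in the $(1+b/r_j')$ and $(1+b/q_j')$. The decisive simplification is that the external factor $(b/q_1'+1)$ cancels the single $(1+b/q_1')^{-1}$ produced by $\nu(a,u_1)=1$, so that for every $a \le u_2$ the combined weight $(b/q_1'+1)\Lambda_{\mathbf{\alpha},\mathbf{\beta}}(a+bp)$ is an honest polynomial in $b$ with no denominators: it equals $1+b/q_1'$, $1$, $1+b/r_1'$, $\prod_{j=1}^3(1+b/r_j')$, and $\prod_{j=1}^4(1+b/r_j')$ on the first five intervals respectively.

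Next I would expand the bracket in (\ref{eq:inn-sum-def}) to second order in $b$ on the first three intervals and multiply by $p\,H(a)$. A direct term-by-term comparison shows the coefficients of $b$ and $b^2$ reproduce exactly the three sums defining $C_1$ and $C_2$ in (\ref{eq:C-def}), while each leftover term carries an explicit factor $p^3$ times a $p$-integral quantity (using that $H(a)$ is a unit on $[0,u_1]$ and $(t_1,t_2]$) and so vanishes mod $p^3$. On $(t_2,t_4]$ and $(t_4,u_2]$ the weight is polynomial and $v_p(pH(a)) \ge 3$ by Corollary \ref{cor:hypergeometric-valuations-through-p}, so those contributions are already $\equiv 0 \pmod{p^3}$. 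Hence $R(b)$ is, modulo $p^3$, precisely the contribution of the last interval $(u_2,p-1]$, and the proposition reduces to $p^s H(b) R(b) \equiv 0 \pmod{p^3}$.

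The main obstacle is this final interval. For $u_2 < a \le p-1$ one has $\nu(a,u_2)=1$, so the factor $(1+b/q_2')^{-1}$ survives uncancelled and $p\,H(a)$ times the bracket has $p$-adic valuation only $\ge 2$ rather than $\ge 3$. The resolution is to absorb the pole into the outer coefficient: writing $(1+b/q_2')^{-1} = q_2'/(q_2'+b)$ and using the relabeling (\ref{eq:beta-dwork-dash-relabel}), which places $q_2'$ itself among the entries of $\mathbf{\beta}$, the telescoping identity $(q_2')_{b+1} = (q_2')_b\,(q_2'+b)$ turns $p^s H(b)/(q_2'+b)$ into the Pochhammer ratio $p^s \prod_{j=1}^4 (r_j)_b \big/ \big((1)_b^2 (q_1'+1)_b (q_2')_{b+1}\big)$. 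I would then verify that this lies in $\ZZ_p$ by the same valuation bookkeeping as in Proposition \ref{prop:hypergeometric-coefficient-valuations} and Corollary \ref{cor:p-adic-valuation-jumps}: the pole $v_p(q_2'+b)$ is nonzero only when $[b]_0 = u_2$, and for such $b$ the required inequality $v_p\!\big(p^s H(b)/(q_2'+b)\big)\ge 0$ follows from the large base-case valuation (where $v_p(pH)=4$ on $(t_4,u_2]$) propagated through the inductive shift structure of §\ref{sec:valuations}. Since $pH(a)$ supplies valuation $\ge 3$ on $(u_2,p-1]$ and $q_2'$ is a unit, this forces $p^s H(b) R(b) \equiv 0 \pmod{p^3}$ and completes the reduction. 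I expect the integrality of $p^s H(b)/(q_2'+b)$ to be the technical heart of the argument.
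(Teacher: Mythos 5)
Your proposal is correct and takes essentially the same route as the paper: the same interval-by-interval expansion of $(b/q_1'+1)\Lambda_{\mathbf{\alpha},\mathbf{\beta}}(a+bp)$ producing $C_1b+C_2b^2$ plus terms killed by Corollary \ref{cor:hypergeometric-valuations-through-p}, and the same key technical fact that $p^s\prod_{j}(r_j)_b/(q_j)_b$ divided by $q_2'+b$ is $p$-integral, proved by the same mechanism (the valuation drop of $H$ at indices $b+1$ with $b \equiv -q_2' \pmod{p^i}$, via Proposition \ref{prop:hypergeometric-coefficient-valuations}). The only difference is packaging: the paper splits into cases $b \not\equiv -q_2' \pmod{p}$ (Lemmas \ref{lem:I-b-p-integrality}, \ref{lem:b-expand}) versus $b \equiv -q_2' \pmod{p}$, handling the latter through the auxiliary $\widetilde{\Lambda}$ and $\widetilde{I}$, whereas you treat all $b$ uniformly and absorb the pole through the telescoping identity $(q_2')_{b+1}=(q_2')_b(q_2'+b)$ --- a cosmetic rather than substantive distinction.
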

Before proving Proposition \ref{prop:b-redux} we prove a few useful Lemmas concerning the $p$-adic valuations of particular parts of $I(b)$.  In particular we consider cases of $b$, depending on whether or not the terms involving $b$ in $I(b)$ are $p$-adically integral.  We first determine exactly when this $p$-integrality occurs.
\begin{lemma}\label{lem:I-b-p-integrality}
    Let $0 \leq a \leq p-1$, $b \in \NN$, and $p \geq 7$ be a prime. For each $\mathrm{HD}_{(r_1, r_2, q)}$, and with $u_i$ and $t_i$ defined as in (\ref{eq:ts-us-defn}), the following is true.  If either $a \leq u_2$ or $b \not\equiv -q_2' \pmod{p}$, then $\left(1+b/q_1'\right) \Lambda_{\mathbf{\alpha}, \mathbf{\beta}}(a+bp) \in \ZZ_p$.
\end{lemma}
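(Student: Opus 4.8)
The plan is to isolate the only factors of $\Lambda_{\mathbf{\alpha}, \mathbf{\beta}}(a+bp)$ that can fail to be $p$-adically integral and to show that the prefactor $1+b/q_1'$ neutralizes the single dangerous one. First I would record two harmless observations. Since $\mathbf{\alpha}$ is closed under the Dwork dash operation, each $r_j'$ again lies in $\mathbf{\alpha}$ and is therefore a unit in $\ZZ_p$ for $p \geq 7$; hence every factor $(1+b/r_j')^{\nu(a,t_j)} = ((r_j'+b)/r_j')^{\nu(a,t_j)}$ lies in $\ZZ_p$, so the $\mathbf{\alpha}$-part of $\Lambda_{\mathbf{\alpha}, \mathbf{\beta}}$ is always integral and may be ignored. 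Likewise, by (\ref{eq:u-identities}) we have $u_3 = u_4 = p-1$, so for every $a$ with $0 \leq a \leq p-1$ we get $\nu(a,u_3) = \nu(a,u_4) = 0$ and the $j=3,4$ factors equal $1$. Consequently the whole question reduces to the integrality of the single combined expression
\[
    Q := \left(1+\frac{b}{q_1'}\right)^{\,1-\nu(a,u_1)}\left(1+\frac{b}{q_2'}\right)^{-\nu(a,u_2)},
\]
in which the leading exponent absorbs the prefactor $1+b/q_1'$ together with the $j=1$ factor of $\Lambda_{\mathbf{\alpha}, \mathbf{\beta}}$.

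Next I would note that each $q_j'$ is a rational with denominator and numerator coprime to $p$ for $p \geq 7$, hence a unit in $\ZZ_p$, and that $1+b/q_1' = (q_1'+b)/q_1'$ is never zero since $b \geq 0$ and $q_1' > 0$; thus each of $1+b/q_1'$ and $1+b/q_2'$ is a nonzero element of $\ZZ_p$ whose inverse is integral precisely when $b \not\equiv -q_1'$ (resp.\ $-q_2'$) modulo $p$. The argument then splits according to the position of $a$ relative to $u_1$ and $u_2$, which satisfy $u_1 < u_2$ by (\ref{eq:u-t-comparison}). If $a \leq u_1$, then $\nu(a,u_1) = \nu(a,u_2) = 0$ and $Q = 1+b/q_1' \in \ZZ_p$. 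If $u_1 < a \leq u_2$, then $\nu(a,u_1) = 1$ and $\nu(a,u_2) = 0$, so the prefactor exactly cancels the $j=1$ denominator and $Q = 1$. In both of these ranges $Q$ is integral with no condition on $b$, and together they constitute exactly the alternative $a \leq u_2$ of the hypothesis. Finally, if $a > u_2$ then $\nu(a,u_1) = \nu(a,u_2) = 1$, so $Q = (1+b/q_2')^{-1}$, which is integral precisely when $b \not\equiv -q_2' \pmod{p}$ — the remaining alternative in the hypothesis. Combining the three ranges then proves the lemma.

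The computation itself is short, so the only genuine subtlety — and the step I would take the most care with — is the bookkeeping of the exponents $\nu(a,u_j)$ across the three ranges of $a$, together with the key observation that the prefactor $1+b/q_1'$ pairs with the $j=1$ factor of $\Lambda_{\mathbf{\alpha}, \mathbf{\beta}}$ rather than the $j=2$ factor. This pairing is what renders the $j=1$ contribution harmless for all $b$ while leaving the $j=2$ contribution exposed once $a$ exceeds $u_2$, and it is precisely this exposure that forces the condition $b \not\equiv -q_2' \pmod{p}$ in the one remaining range. I would also make the cancellation $(1+b/q_1')(1+b/q_1')^{-1} = 1$ explicit, since it must be valid even when $1+b/q_1'$ happens to lie in $p\ZZ_p$ (that is, when $b \equiv -q_1' \pmod{p}$).
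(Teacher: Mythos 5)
Your proof is correct and takes essentially the same approach as the paper: both arguments observe that the $r_j'$ and $q_1'$ are $p$-adic units, so every factor of $\left(1+b/q_1'\right)\Lambda_{\mathbf{\alpha},\mathbf{\beta}}(a+bp)$ is integral except possibly $\left(1+b/q_2'\right)^{-1}$, which enters only when $a > u_2$ and is non-integral only when $b \equiv -q_2' \pmod{p}$. The only difference is cosmetic: you do the bookkeeping through the exponents $\nu(a,u_j)$ after discarding the $\mathbf{\alpha}$-factors, whereas the paper tabulates the full six-range piecewise formula for $\left(1+b/q_1'\right)\Lambda_{\mathbf{\alpha},\mathbf{\beta}}(a+bp)$ before drawing the same conclusion.
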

\begin{proof}
    By definition of $\Lambda$, $\mathbf{\alpha}$, and $\mathbf{\beta}$, we have
    \begin{singnumalign}\label{eq:Lambda-explicit}
        \left(1+\frac{b}{q_1'}\right)&\Lambda_{\mathbf{\alpha}, \mathbf{\beta}}(a+bp) \\
        &= 
        \begin{cases}
            1+b/q_1' & 0 \leq a \leq u_1 \\
            1 & u_1 < a \leq t_1 \\
            1+b/r_1' & t_1 < a \leq t_2 \\
            (1+b/r_1')(1+b/r_2')(1+b/r_3') & t_2 < a \leq t_4 \\
            (1+b/r_1')(1+b/r_2')(1+b/r_3')(1+b/r_4') & t_4 < a \leq u_2 \\
            \frac{(1+b/r_1')(1+b/r_2')(1+b/r_3')(1+b/r_4')}{1+b/q_2'} & u_2 < a \leq p-1.
        \end{cases}
    \end{singnumalign}
    By choice of $\mathbf{\alpha}$, $\mathbf{\beta}$, and $p$, each $r_i'$ and $q_1'$ all belong to $\ZZ_p^{\times}$, and so the numerator of $(1+b/q_1')\Lambda_{\mathbf{\alpha}, \mathbf{\beta}}(a+bp)$ belongs to $\ZZ_p$.  Thus, the only way that this term could fail to be $p$-adically integral is if $u_2 < a \leq p-1$ and $p \mid 1+b/q_2'$.  As $q_2' \in \ZZ_p$, this second condition is equivalent to $b \equiv -q_2' \pmod{p}$, completing the proof.
\end{proof}
\begin{remark}
    The converse of the above lemma does not hold.  It can be the case that $u_2 < a \leq p-1$ and $b \equiv -q_2' \pmod{p}$, but $(1+b/q_1')\Lambda_{\mathbf{\alpha}, \mathbf{\beta}}(a+bp)$ is $p$-adically integral if the numerator is more highly divisible by $p$ then $1+b/q_2'$.
\end{remark}
In the next Lemma, we consider the case where $(1+b/q_1')\Lambda(a+bp) \in \ZZ_p$.  We show that in this case $I(b) \equiv C_1b+C_2b^2 \pmod{p^3}$.  In Proposition \ref{prop:hypergeometric-coefficient-valuations} we showed that, for any $0 \leq b \leq p^s-1$,
\[
    p^s \prod_{j=1}^4 \frac{(r_j)_b}{(q_j)_b} \in \ZZ_p.
\]
Thus, establishing this congruence between $I(b)$ and $C_1b+C_2b^2$ implies Proposition \ref{prop:b-redux} for such $b$.
\begin{lemma}\label{lem:b-expand}
    Let $b$ be such that $\left(1+\sfrac{b}{q_1'}\right)\Lambda_{\mathbf{\alpha}, \mathbf{\beta}}(a+bp) \in \ZZ_p$, and let $C_1$ and $C_2$ be as in (\ref{eq:C-def}).  We have
    \[
        I(b) \equiv C_1b + C_2b^2 \pmod{p^3}.
    \]
\end{lemma}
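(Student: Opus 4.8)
The plan is to expand the bracketed quantity in the definition \eqref{eq:inn-sum-def} of $I(b)$ as a polynomial in $b$ up to order $b^2$, discarding all contributions that lie in $p^3\ZZ_p$, and to check that what survives is precisely $C_1 b + C_2 b^2$. First I would write out the product $\left(\sfrac{b}{q_1'}+1\right)\Lambda_{\mathbf{\alpha}, \mathbf{\beta}}(a+bp)\bigl(1+J_1(a)bp+J_2(a)(bp)^2\bigr)$ and subtract the constant $1$. Since we assume $\left(1+\sfrac{b}{q_1'}\right)\Lambda_{\mathbf{\alpha}, \mathbf{\beta}}(a+bp)\in\ZZ_p$, Lemma \ref{lem:I-b-p-integrality} governs the range of $a$ and the whole bracket is $p$-adically integral. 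I would substitute the explicit case-by-case formula \eqref{eq:Lambda-explicit} for $\left(1+\sfrac{b}{q_1'}\right)\Lambda_{\mathbf{\alpha}, \mathbf{\beta}}(a+bp)$, so the bracket becomes a genuine polynomial in $b$ whose coefficients depend on $a$ through the $r_j'$, $q_j'$, $J_1(a)$, and $J_2(a)$.

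The crucial simplification is that the prefactor $p\prod_{j=1}^4 \sfrac{(r_j)_a}{(q_j)_a}=pH(a)$ carries a $p$-adic valuation that depends sharply on which interval $a$ lies in, as recorded in Corollary \ref{cor:hypergeometric-valuations-through-p}. For $t_3 < a \leq p-1$ we have $pH(a)\equiv 0\pmod{p^3}$, so those terms drop out of $I(b)$ entirely modulo $p^3$; this is exactly why $C_1$ and $C_2$ only involve sums over $0\leq a\leq t_2$. In the surviving ranges $0\leq a\leq u_1$, $u_1<a\leq t_1$, and $t_1<a\leq t_2$, I would track how many factors of $p$ come from $pH(a)$ (namely $1$, $0$, $1$ respectively) and combine them with the explicit powers of $bp$ appearing in the bracket. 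Terms with too many factors of $p$ relative to $b$ or $b^2$ vanish, and matching the $b^1$ and $b^2$ coefficients in each interval against \eqref{eq:C-def} is then a bookkeeping check: the $\sfrac{1}{q_1'}$ and $\sfrac{1}{r_1'}$ contributions come from the linear-in-$b$ parts of \eqref{eq:Lambda-explicit}, while the $J_1(a)$ and $J_2(a)$ contributions come from the $\bigl(1+J_1(a)bp+J_2(a)(bp)^2\bigr)$ factor.

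The main obstacle is the valuation accounting at the boundaries of the three intervals, where $pH(a)$ changes valuation and where the polynomial $\left(1+\sfrac{b}{q_1'}\right)\Lambda_{\mathbf{\alpha}, \mathbf{\beta}}$ switches formula in \eqref{eq:Lambda-explicit}. I must verify in each interval that every monomial in $b$ of degree $\geq 3$, and every lower-degree monomial carrying a surplus power of $p$, indeed lands in $p^3\ZZ_p$ after multiplication by $pH(a)$, so that truncating to $C_1 b + C_2 b^2$ is legitimate. In particular, in the middle interval $u_1 < a \leq t_1$ where $pH(a)$ is a $p$-adic unit, the bracket reduces to $J_1(a)bp + J_2(a)(bp)^2$ since $\Lambda=1$ there, giving the clean $pJ_1(a)b$ and $p^2 J_2(a)b^2$ terms; in the two outer surviving intervals the extra factor of $p$ from $pH(a)$ forces the $J_2(a)(bp)^2$ piece up to $p^3$, which explains why only $J_1$ survives in $C_2$ for those ranges. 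Once these divisibility checks are in hand, collecting the $b$ and $b^2$ coefficients yields exactly $C_1 b + C_2 b^2$, completing the proof.
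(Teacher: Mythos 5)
Your proposal follows essentially the same route as the paper's proof: reduce the sum to $0 \leq a \leq t_2$ using the integrality hypothesis together with Corollary \ref{cor:hypergeometric-valuations-through-p}, substitute the explicit case formula (\ref{eq:Lambda-explicit}) for $\left(1+\sfrac{b}{q_1'}\right)\Lambda_{\mathbf{\alpha},\mathbf{\beta}}(a+bp)$, expand in powers of $b$, and absorb the $b^3$ terms and the surplus $p\,J_2(a)$ contributions in the outer intervals into $p^3\ZZ_p$ via the valuations $1,0,1$ of $pH(a)$ on the three surviving ranges. The bookkeeping you describe, including why only $J_1$ survives in $C_2$ outside the middle interval, is exactly the regrouping the paper carries out.
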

\begin{proof}
    Assume $b \geq 0$ is such that $\left(1+\sfrac{b}{q_1'}\right) \Lambda_{\mathbf{\alpha}, \mathbf{\beta}}(a+bp) \in \ZZ_p$.  Then the entire term
    \[
        \left[ \left(\frac{b}{q_1'}+1\right) \Lambda_{\mathbf{\alpha},\mathbf{\beta}}(a+bp)(1+J_1(a)bp+J_2(a)(bp)^2 ) - 1\right]
    \]
    is $p$-adically integral.  Thus, for all $a$ such that the $p$-adic valuation of $H(k)$ is at least $2$, the corresponding term of the sum in $I(b)$ will vanish mod $p^3$.  By Corollary \ref{cor:hypergeometric-valuations-through-p},
    \begin{singnumalign} \label{eq:I-b-reduction}
        I(b) \equiv &\sum_{a=0}^{t_2} p \prod_{j=1}^4 \frac{(r_j)_a}{(q_j)_a} \\
        &\times\left[ \left(\frac{b}{q_1'}+1\right) \Lambda(a+bp)(1+J_1(a)bp+J_2(a)(bp)^2 ) - 1\right] \pmod{p^3}.
    \end{singnumalign}
    Using the explicit values of $\Lambda(a+bp)$ in (\ref{eq:Lambda-explicit}), we have
    \begin{align*}
        I(b) \equiv &\sum_{a=0}^{u_1} p\prod_{j=1}^4 \frac{(r_j)_a}{(q_j)_a} \left[ \left(p J_1(a) + \frac{1}{q_1'}\right)b + \left(p^2J_2(a) + \frac{pJ_1(a)}{q_1'} \right)b^2 + \frac{p^2J_2(a)}{q_1'}b^3 \right] \\
        &+ \sum_{a=u_1+1}^{t_1} p\prod_{j=1}^4 \frac{(r_j)_a}{(q_j)_a} \left[ pJ_1(a)b + p^2J_2(a)b^2\right] \\
        &+ \sum_{a=t_1+1}^{t_2} p\prod_{j=1}^4 \frac{(r_j)_a}{(q_j)_a} \left[ \left( pJ_1(a) + \frac{1}{r_1'}\right)b + \left(p^2J_2(a) + \frac{pJ_1(a)}{r_1'}\right)b^2 + \frac{p^2J_2(a)}{r_1'}b^3 \right].
    \end{align*}
    Regrouping the above terms by powers of $b$, we have $I(b) \equiv C_1b+C_2b^2 + C_3b^3 \pmod{p^3}$, where
    \begin{singnumalign}\label{Ccoeffs}
        C_1 =& p \cdot \biggr[ \sum_{a=0}^{u_1} \prod_{j=1}^4 \frac{(r_j)_a}{(q_j)_a}\left(pJ_1(a)+\frac{1}{q_1'}\right) + \sum_{a= u_1+1}^{t_1} p \prod_{j=1}^4  \frac{(r_j)_a}{(q_j)_a} J_1(a) \\
        &+ \sum_{a = t_1+1}^{t_2} \prod_{j=1}^4 \frac{(r_j)_a}{(q_j)_a} \left(J_1(a)p + \frac{1}{r_1'}\right) \biggr]\\
        C_2 =& p^2 \cdot \biggr[ \sum_{a=0}^{u_1} \prod_{j=1}^4 \frac{(r_j)_a}{(q_j)_a} \left(\frac{J_1(a)}{q_1'} + pJ_2(a)\right) + \sum_{a = u_1+1}^{t_1} p \prod_{j=1}^4 \frac{(r_j)_a}{(q_j)_a} J_2(a) \\
        &+ \sum_{a = t_1+1}^{t_2} \prod_{j=1}^4 \frac{(r_j)_a}{(q_j)_a} \left( p J_2(a) + \frac{J_1(a)}{r_1'} \right)\biggr], \\
        C_3 =& p^3\cdot\left(\sum_{a=0}^{u_1} \prod_{j=1}^4 \frac{(r_j)_a}{(q_j)_a} \frac{J_2(a)}{q_1'} + \sum_{a = t_1+1}^{t_2} \prod_{j=1}^4 \frac{(r_j)_a}{(q_j)_a} \frac{J_2(a)}{r_1'}\right).
    \end{singnumalign}
    The term inside of the parentheses is $p$-integral in each case, as the only piece which is not is $H_{\alpha,\beta}(a)$ when $u_1 < a \leq t_1$, but this is made up for in each $C_i$ by an extra factor of $p$ appearing in that portion of the sum.  In particular we see that $C_3 \equiv 0 \pmod{p^3}$, and so
    \[
        I(b) \equiv C_1b+C_2b^2 \pmod{p^3},
    \]
    as was to be shown.
\end{proof}
We are now ready to prove Proposition \ref{prop:b-redux} for all $0 \leq b \leq p^{s-1}$.
\begin{proof}[Proof of Proposition \ref{prop:b-redux}]
    Lemmas \ref{lem:I-b-p-integrality} and \ref{lem:b-expand} give the result in the case where $b \not\equiv -q_2' \pmod{p}$.  Thus, we consider the case where $b \equiv -q_2' \pmod{p}$.  Define $\widetilde{\Lambda}_{\mathbf{\alpha}, \mathbf{\beta}}(a+bp)$ as follows:
    \begin{singnumalign}\label{eq:Lambda-prime}
        \widetilde{\Lambda}_{\mathbf{\alpha}, \mathbf{\beta}}(a+bp) &= \Lambda_{\mathbf{\alpha},\mathbf{\beta}}(a+bp)\left(1+\sfrac{b}{q_2'}\right)^{\nu(a,u_2)} \\
        &=
        \begin{cases}
            \Lambda_{\mathbf{\alpha},\mathbf{\beta}}(a+bp) & 0 \leq a < u_2 \\
            (1+\sfrac{b}{q_2'})\Lambda_{\mathbf{\alpha},\mathbf{\beta}}(a+bp) & u_2 \leq a \leq p-1.
        \end{cases}
    \end{singnumalign}
    From the discussion in the proof of Lemma \ref{lem:I-b-p-integrality}, $(1+b/q_1')\widetilde{\Lambda}_{\mathbf{\alpha},\mathbf{\beta}}(a+bp)$ belongs to $\ZZ_p$.  We define $\widetilde{I}(b)$ similarly to $I(b)$, but with $\Lambda$ replaced by $\widetilde{\Lambda}$.  That is,
    \begin{equation}\label{modifiedinnersum}
        \widetilde{I}(b) = \sum_{a=0}^{p-1} p\prod_{j=1}^4 \frac{(r_j)_a}{(q_j)_a} \left[ \left(\frac{b}{q_1'}+1\right) \widetilde{\Lambda}(a+bp)(1+J_1(a)bp+J_2(a)(bp)^2 ) - 1\right].
    \end{equation}
    As $J_1, J_2 \in \ZZ_p^\times$, it follows that the expression inside of the brackets is $p$-integral.  This means that the $a^{th}$ term of the sum $\widetilde{I}(b)$, which we will denote by $\widetilde{I}_a(b)$, satisfies
    \[
        v_p(\widetilde{I}_a(b)) \geq v_p\left(p \prod_{j=1}^4 \frac{(r_j)_a}{(q_j)_a}\right).
    \]
    Thus, Corollary \ref{cor:hypergeometric-valuations-through-p} implies $\widetilde{I}_a(b)$ vanishes modulo $p^3$ whenever $a > t_2$.  Therefore, modulo $p^3$ we may write $\widetilde{I}(b)$ as
    \[
        \sum_{a=0}^{t_2} p \prod_{j=1}^4 \frac{(r_j)_a}{(q_j)_a} \left[ \left(\frac{b}{q_1'}+1\right) \widetilde{\Lambda}(a+bp)(1+J_1(a)bp+J_2(a)(bp)^2 ) - 1\right].
    \]
    But, we know that $\widetilde{\Lambda}_{\mathbf{\alpha},\mathbf{\beta}}(a+bp) = \Lambda_{\mathbf{\alpha},\mathbf{\beta}}(a+bp)$ for all $0 \leq a \leq t_2$.  Thus, $\widetilde{I}(b)$ simplifies further modulo $p^3$ as
    \[
        \sum_{a=0}^{t_2} p \prod_{j=1}^4 \frac{(r_j)_a}{(q_j)_a} \left[ \left(\frac{b}{q_1'}+1\right) \Lambda_{\mathbf{\alpha},\mathbf{\beta}}(a+bp)(1+J_1(a)bp+J_2(a)(bp)^2 ) - 1\right].
    \]
    This is exactly how we rewrote $I(b)$ in (\ref{eq:I-b-reduction}), and so by the same calculations we have that
    \[
        \widetilde{I}(b) \equiv C_1b+C_2b^2 \pmod{p^3}.
    \]
    To finish the proof of the proposition, it now suffices to show for $b \equiv -q_2' \pmod{p}$ that, if $\widetilde{I}(b)$ vanishes modulo $p^3$ then so does $p^s \prod_{j=1}^4 \frac{(r_j)_b}{(q_j)_b} I(b)$.  To do so, we show
    \begin{equation}\label{eq:I-tilde-I-congruence}
        p^s \prod_{j=1}^4 \frac{(r_j)_b}{(q_j)_b} I(b) \equiv p^s \prod_{j=1}^4 \frac{(r_j)_b}{(q_j)_b} \widetilde{I}(b) \pmod p^3.
    \end{equation}
    From the definition of $I(b)$ and $\widetilde{I}(b)$, the difference of the expressions in the above congruence is
    \begin{singnumalign}\label{eq:I-tilde-I-difference}
        p^s \prod_{j=1}^4& \frac{(r_j)_b}{(q_j)_b} \sum_{a=0}^{p-1} p \prod_{j=1}^4 \frac{(r_j)_a}{(q_j)_a} \\
        &\times \left[ \left( \frac{b}{q_1'} + 1 \right) \left(\Lambda(a+bp) - \widetilde{\Lambda}(a+bp)\right) (1+J_1(a)bp+J_2(a)(bp^2)\right]
    \end{singnumalign}
    By (\ref{eq:Lambda-prime}), the term $\Lambda-\widetilde{\Lambda}$ vanishes for all $a \leq u_2$.  Thus, we need only consider indices $a$ satisfying $u_2 < a \leq p-1$.  For such $a$, we have
    \begin{align*}
        \Lambda(a+bp)-\widetilde{\Lambda}(a+bp)
        &= \left(\frac{1}{1+\sfrac{b}{q_2'}}\right)\widetilde{\Lambda}(a+bp) - \widetilde{\Lambda}(a+bp) \\
        &= \widetilde{\Lambda}(a+bp)\left(\frac{-b}{q_2'(1+\sfrac{b}{q_2'})}\right).
    \end{align*}
    So, (\ref{eq:I-tilde-I-difference}) simplifies to
    \[
        p^s \frac{-b(1+\sfrac{b}{q_1'})}{q_2'(1+\sfrac{b}{q_2'})}\prod_{j=1}^4 \frac{(r_j)_b}{(q_j)_b} \sum_{a = t_2+1}^{p-1} p \prod_{j=1}^4 \frac{(r_j)_a}{(q_j)_a} \widetilde{\Lambda}(a+bp)\left(1+J_1(a)bp + J_2(a)b^2p^2\right).
    \]
    If we show that this expression vanishes mod $p^3$, we will be done.  To do this, we first consider the sum
    \[
        \sum_{a = u_2+1}^{p-1} p \prod_{j=1}^4 \frac{(r_j)_a}{(q_j)_a} \widetilde{\Lambda}(a+bp)\left(1+J_1(a)bp + J_2(a)b^2p^2\right).
    \]
    Each of $\widetilde{\Lambda}(a+bp)$ and $(1+J_1(a)bp+J_2(a)b^2p^2)$ are $p$-adically integral.  By Corollary \ref{cor:p-adic-valuation-jumps}, the hypergeometric coefficient has $p$-adic valuation equal to 2.  Hence, for each $a$ in this range,
    \begin{align*}
        v_p&\left(p \prod_{j=1}^4 \frac{(r_j)_a}{(q_j)_a} \widetilde{\Lambda}(a+bp)\left(1+J_1(a)bp + J_2(a)b^2p^2\right) \right) \\
        &\hspace{40mm}= v_p(p)+v_p\left(\prod_{j=1}^4 \frac{(r_j)_a}{(q_j)_a}\right) \\
        &\hspace{45mm}+ v_p\left( \widetilde{\Lambda}(a+bp) (1+J_1(a)bp+J_2(a)b^2p^2)\right) \\
        &\hspace{40mm}\geq 1 + 2 + 0 = 3.
    \end{align*}
    Thus, we need only show that the expression
    \[
         p^s \frac{-b(1+\sfrac{b}{q_1'})}{q_2'(1+\sfrac{b}{q_2'})}\prod_{j=1}^4 \frac{(r_j)_b}{(q_j)_b}
    \]
    is $p$-adically integral.  To do so, we first note that, as $p \nmid b$,
    \[
        v_p \left( \frac{-b(1+\sfrac{b}{q_1'})}{q_2'}\right) = 0,
    \]
    and so we need only consider
    \[
        p^s\left(\frac{1}{1+\sfrac{b}{q_2'}}\right) \prod_{j=1}^4 \frac{(r_j)_b}{(q_j)_b}.
    \]
    Let $i$ be the largest integer such that $b \equiv -q_2' \pmod{p^i}$.  Then we have
    \[
        v_p\left(\frac{1}{1+\sfrac{b}{q_2'}}\right) = -i.
    \]
    The condition $b \equiv -q_2' \mod p^i$ is equivalent to saying that $b = [-q_2']_{i-1} + np^i$.  Additionally, our labelling of $\mathbf{\beta}$ guarantees us that $q_2' \in \mathbf{\beta}$, and so therefore $[-q_2']_{i-1} \in \mathbf{\beta}_{i-1}$, where $\mathbf{\beta}_{i-1}$ is labelled as in Corollary \ref{cor:p-adic-valuation-jumps}.  Thus, there exists $j \in \left\{1, 2\right\}$ such that 
    \[
        b = u_j^{(i-1)} + np^i.
    \]
    From the inductive procedure to find the $p$-adic valuations of $H(k)$ described in Proposition \ref{prop:hypergeometric-coefficient-valuations}, there must be a decrease in the $p$-adic valuations of $H(k)$ that occurs at $u_j^{(i-1)}+np^i +1$, as at this index a multiple of $p^i$ is added to the denominator.  In particular,
    \[
        v_p(H(b)) = v_p(H(b+1)) + i.
    \]
    From Proposition \ref{prop:hypergeometric-coefficient-valuations} we also know that $v_p(H(b+1)) \geq -s$.  Therefore,
    \[
        v_p \left(p^s \left(\frac{1}{1+\sfrac{b}{q_2'}}\right) \prod_{j=1}^4 \frac{(r_j)_b}{(q_j)_b} \right) \geq s - i + (i-s) = 0.
    \]
    We have thus shown that this term is $p$-adically integral, completing the proof.
\end{proof}
The discussion at the beginning of the section and Proposition \ref{prop:b-redux} imply that Theorem \ref{thm:dwork-cong} follows from establishing that $C_1b+C_2b^2 \equiv 0 \pmod{p^3}$ for all $b$.  Before showing this, we introduce a number of necessary identities.  First, from (\ref{eq:gamma-to-p-gamma}) and the definition of the Pochhammer symbol, for all $0 \leq a \leq p-1$ we have
\begin{equation}\label{eq:shift-fact-to-p-gamma}
    (t)_a = (-1)^a \frac{\Gamma_p(t+a)}{\Gamma_p(t)} (t+[-t]_0)^{\nu(a, [-t]_0)}.
\end{equation}
Taking logarithmic derivatives of both sides we obtain
\begin{equation}\label{eq:log-der}
    \frac{ \frac{d}{dt}(t)_a}{(t)_a} = G_1(t+a)-G_1(t) + \frac{\nu(a, [-t]_0)}{t+[-t]_0}.
\end{equation}
And differentiating again we obtain
\begin{align}\label{eq:second-log-der}
    \frac{\frac{d^2}{dt^2} (t)_a}{(t)_a} &= \left(G_1(t+a)-G_1(t) + \frac{\nu(a, [-t]_0)}{t+[-t]_0} \right)^2 \notag \\
    &+ G_2(t+a)-G_2(t)-G_1^2(t+a) + G_1^2(t) - \frac{\nu(a,[-t]_0)}{(t+[-t]_0)^2}.
\end{align}
Now, to show that $C_1 \equiv C_2 \equiv 0 \pmod{p^3}$, we introduce particular rational functions whose residues we relate to the $C_i$.  This is similar to the approach taken by Long, Tu, Yui, and Zudilin, as well as other authors to establish similar hypergeometric identities \cite{Chu, OSZ, LTYZ, Zudilin}.  We define, for $i \in \left\{1, 2 \right\}$
\[
    R_i(t) = \frac{ \prod_{j=1}^4 (-t+1-pr_j')_{t_j} (-t+q_1)_{u_1+1}}{(-t+1-pq_1')_{u_1+1}(t)_p^{(i+1)}}.
\]
By (\ref{eq:t-identities}) and (\ref{eq:u-identities}), the degree of the numerator of $R_i(t)$ is $2p-1+u_1$, whereas the denominator has degree $(i+1)p+1+u_1$.  For both choices of $i$ the degree of the denominator is at least $2$ greater than that of the numerator, so the residue sum theorem implies that the sum of the residues of $R_i$ is equal to zero.  These rational functions have partial fraction decompositions
\[
    R_1(t) = \sum_{k=0}^{p-1} \frac{A^{(1)}_k}{(t+k)^2} + \sum_{k=1}^{p-1} \frac{B^{(1)}_k}{t+k} + \sum_{k=1}^{u_1+1} \frac{D^{(1)}_k}{(-t+k-pq_1')},
\]
and
\[
    R_2(t) = \sum_{k=0}^{p-1} \left(\frac{A^{(2)}_k}{(t+k)^3} + \frac{E^{(2)}}{(t+k)^2} + \frac{B^{(2)}_k}{t+k}\right) + \sum_{k=1}^{u_1+1} \frac{D^{(2)}_k}{-t+k-pq_1'}.
\]
By the residue theorem, it follows that
\[
    0 = \sum_{k=0}^{p-1} B^{(1)}_k + \sum_{k=1}^{u_1+1} D^{(1)}_k.
\]
and
\[
    0 = \sum_{k=0}^{p-1} B^{(2)}_k + \sum_{k=1}^{u_1+1} D^{(2)}_k.
\]
Our goal is to relate these residue sum to the $C_i$ in order to show that $C_i \equiv 0 \pmod{p^3}$ for each $i$.  We begin with a lemma which shows that each $D^{(i)}_k$ is small $p$-adically.
\begin{lemma}\label{lem:Dk-valuation}
    Set notation as above, and let $1 \leq k \leq u_1+1$.  For $i \in \left\{1, 2\right\}$, there exists $\delta^{(i)}_k \in \ZZ_p$ such that $D^{(i)}_k = p^{(3-i)}\delta_k$.
\end{lemma}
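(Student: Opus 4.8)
The plan is to compute each $D^{(i)}_k$ explicitly as a residue and then bound its $p$-adic valuation below by $3-i$, after which we simply set $\delta^{(i)}_k := D^{(i)}_k/p^{3-i}$ and note it lies in $\ZZ_p$. The poles producing the $D^{(i)}_k$ come from the factor $(-t+1-pq_1')_{u_1+1}$ in the denominator of $R_i$; its $k$-th linear factor vanishes at the simple pole $t_* := k-pq_1'$. Writing $R_i(t) = \widetilde{P}(t)/(-t+k-pq_1')$ with $\widetilde{P}$ regular at $t_*$, the residue formula gives $D^{(i)}_k = \widetilde{P}(t_*)$, where
\[
\widetilde{P}(t) = \frac{\prod_{j=1}^4(-t+1-pr_j')_{t_j}\,(-t+q_1)_{u_1+1}}{\Big[\prod_{m\ne k-1}(-t+1-pq_1'+m)\Big]\,(t)_p^{\,i+1}}.
\]
The whole argument then reduces to a valuation count of $\widetilde{P}(t_*)$, factor by factor.

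First I would substitute $t_* = k-pq_1'$ into the four numerator Pochhammer symbols $(-t_*+1-pr_j')_{t_j}$, whose $\ell$-th factor becomes $1-k+\ell+p(q_1'-r_j')$. This is congruent to the integer $1-k+\ell$ modulo $p\ZZ_p$, and since $|1-k+\ell|<p$ it is divisible by $p$ exactly when $\ell=k-1$. The chain of inequalities $k\le u_1+1\le t_1\le t_j$ coming from (\ref{eq:u-t-comparison}) guarantees that $\ell=k-1$ lies in the admissible range $0\le\ell\le t_j-1$, so each of the four factors contributes precisely one $p$-divisible term, namely $p(q_1'-r_j')$, of valuation $1+v_p(q_1'-r_j')\ge 1$. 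Hence the first numerator factor has valuation at least $4$, while the remaining numerator factor $(-t_*+q_1)_{u_1+1}$, being a product of elements of $\ZZ_p$, has valuation at least $0$.

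For the denominator I would show $v_p((t_*)_p)=1$ exactly: among the $p$ consecutive arguments $t_*,t_*+1,\dots,t_*+p-1$ precisely one is divisible by $p$, and because $t_*\equiv k\pmod p$ that term is $t_*+(p-k)=p(1-q_1')=pq_2'$, of valuation $1$ since $q_2'=1-q_1'\in\ZZ_p^\times$; thus $(t_*)_p^{\,i+1}$ has valuation exactly $i+1$. The bracketed product becomes $\prod_{m\ne k-1}(m-k+1)$, a product of nonzero integers of absolute value at most $u_1<p$, hence a $p$-adic unit. Combining the estimates gives $v_p(D^{(i)}_k)\ge 4-(i+1)=3-i$, which is exactly the assertion.

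The main point requiring care, though routine, is confirming that all the small quantities entering the count are genuine $p$-adic units for every $p\ge 7$: namely $q_2'=1-q_1'$, the integers $m-k+1$, and the differences $q_1'-r_j'$. Each is a fixed nonzero rational whose numerator and denominator are supported on $2$ and $3$ (the denominators of $r_j',q_1',q_2'$ all divide $12$), so coprimality to $p\ge 7$ forces valuation $0$; the only substantive verification is that $q_1'\ne r_j'$ as rationals, which is immediate from the strict inequality $q_1'<r_j'$ in (\ref{eq:u-t-comparison}). Since no cancellation in the numerator can reduce the count below $4$, the lower bound $3-i$ holds for all $1\le k\le u_1+1$ and $i\in\{1,2\}$, completing the proof.
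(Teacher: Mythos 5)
Your argument is the paper's argument in all essentials: you evaluate $D^{(i)}_k$ by clearing the simple pole at $t_*=k-pq_1'$, extract exactly one factor of $p$ (namely $p(q_1'-r_j')$, at the index $\ell=k-1$, which your chain $k-1\le u_1\le t_j-1$ correctly places in range) from each of the four numerator Pochhammer symbols, observe that $(-t_*+q_1)_{u_1+1}$ is $p$-integral, and show the denominator has valuation exactly $i+1$ because the integers $m-k+1$ are nonzero of absolute value less than $p$ and the unique $p$-divisible factor of $(t_*)_p$ is $p(1-q_1')=pq_2'$ with $q_2'\in\ZZ_p^\times$. The only difference is cosmetic: the paper extracts the factor of $p$ from each Pochhammer via the $\Gamma_p$-identity (\ref{eq:shift-fact-to-p-gamma}), whereas you do it by direct inspection of the linear factors; the bookkeeping and the conclusion $v_p(D^{(i)}_k)\ge 4-(i+1)=3-i$ are identical.

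One claim in your final paragraph is false, though harmlessly so here. The differences $q_1'-r_j'$ are \emph{not} always $p$-adic units for $p\ge 7$, and their numerators are not supported on $\{2,3\}$: for $\mathrm{HD}_{(1/2,1/4,7/6)}$ one has $q_1'-r_j'=1/6-3/4=-7/12$, which has valuation $1$ at $p=7$ (the paper singles out exactly this exceptional case), and likewise $1/4-2/3=-5/12$ and $q_2'=5/6$ have numerator $5$. What saves your proof is that the displayed estimate $1+v_p(q_1'-r_j')\ge 1$ only requires $v_p(q_1'-r_j')\ge 0$, which follows from the denominators dividing $12$; any extra valuation in a numerator factor only strengthens the lower bound. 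Unit-ness is genuinely needed only where you use exact valuations in the denominator --- for $q_2'$ and for the integers $m-k+1$ --- and there your claims are correct. So the lemma is proved, but the justification should be restated so that it does not assert the false unit claim for $q_1'-r_j'$.
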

\begin{proof}
    The statement of the lemma is equivalent to saying $v_p(D^{(i)}_k) \geq 3-i$ for all $p$ and $k$.  We compute
    \begin{align*}
        D^{(i)}_k &= (-t+k-pq_1')R_i(t) \biggr\vert_{t = k-pq_1'} \\
        &= \frac{\prod_{j=1}^4 (-k+1+p(q_1'-r_j'))_{t_j} (-k+q_1+pq_1')_{u_1+1}}{(1-k)_{k-1}(1)_{u_1+1-k}(k-pq_1')^{(3-i)}_p}.
    \end{align*}
    Using (\ref{eq:shift-fact-to-p-gamma}), we have first that, for each $j \in \left\{1, 2, 3, 4, \right\}$, 
    \[
        (-k+1+p(q_1'-r_j'))_{t_j} = (-1)^{t_j} \frac{ \Gamma_p(1-k+t_j-pr_j' + pq_1')}{ \Gamma_p(1-k+p(q_1'-r_j'))}\left(p(q_1'-r_j')\right)^{\nu(t_j, k-1)}.
    \]
    As $\Gamma_p$ takes $\ZZ_p$ to $\ZZ_p^{\times}$, the $p$-adic valuation of the above expression depends only on the term $(p(q_1'-r_j'))^{\nu(t_j, k-1)}$. From (\ref{eq:u-t-comparison}), $t_j > u_1$ for all $j$.  Therefore, for $1 \leq k \leq u_1+1$, $t_j > k-1$ and so $\nu(t_j, k-1) = 1$.  For each of our hypergeometric data except $\mathrm{HD}_{(1/2,1/4,7/6)}$ we have $q_1' - r_j' \in \ZZ_p^\times$ for all $p \geq 7$, and in this one exceptional case the same holds for all $p \geq 11$.  For each of these primes, $p(q_1'-r_j')$ has $p$-adic valuation exactly one, and so $(-k+1+p(q_1'-r_j'))_{t_j}$ does as well.  In the exceptional case $\mathbf{\alpha} = \left\{1/2, 1/2, 3/4, 5/4\right\}$ and $\mathbf{\beta} = \left\{ 1, 1, 7/6, 5/6\right\}$ and with $p=7$ and $j$ chosen so that $r_j' = 3/4$ we have $q_1'-r_j' = 1/6-3/4 = 7/12$, and so the $p$-adic valuation of $(-k+1+p(q_1'-r_j'))_{t_j}$ is in fact equal to $2$.  In all cases, we conclude
    \[
        v_p\left( \prod_{j=1}^4 (-k+1+p(q_1'-r_j'))_{t_j}\right) \geq 4
    \]
    for all $1 \leq k \leq u_1+1$. \\~\\
    Next we consider $(-k+q_1+pq_1')_{u_1+1}$.  As before, the $p$-adic valuation depends only on the final term in (\ref{eq:shift-fact-to-p-gamma}), and in this case the exponent is $\nu(u_1+1, [k-q_1-pq_1']_0)$, which reduces to $\nu(u_1+1, [k-q_1]_0)$.  By definition, the leading $p$-adic digit of $-q_1$ is $u_1$, and so as long as $k < [-u_1]_0$, we will have $[k-q_1]_0 = k+u_1$.  This is the case, as by (\ref{eq:u-identities}) $[-u_1]_0 = u_2+2$, and $k \leq u_1+1 < u_2 + 2$.  Therefore, 
    \[
    u_1 + 1 \leq [k-q_1]_0 \leq 2u_1 + 1 < p,
    \]
    and so $\nu(u_1+1, [k-q_1]_0) = 0$ for all $1 \leq k \leq u_1+1$.  In particular we see that $v_p(-k+q_1+pq_1')_{u_1+1} = 0$.  \\~\\
    For the terms in the denominator, we observe that $(1-k)_{k-1}$ and $(1)_{u_1+1-k}$ together contain all $u_1$ integers from $1-k$ to $u_1+1-k$ excluding zero.  As $u_1+1 < p$, there is thus no multiple of $p$ appearing in these two terms, as the only such multiple in this range is the zero we have removed.  Thus, $v_p((1-k)_{k-1}(1)_{u_1+1-k}) = 0$.  Finally, the shifted factorial $(k-pq_1')_p$ will necessarily have exactly one multiple of $p$ appearing within it, namely at the term $k+p-k - pq_1' = p(1-q_1')$. As $1-q_1' \in \ZZ_p^{\times}$ for all $p \geq 7$ and $\mathbf{\beta}$ within our consideration, we conclude that $v_p((k-pq_1')_p^{i+1} = i+1$.  \par
    Putting these calculations together, we conclude that
    \begin{align*}
        v_p(D^{(i)}_k) &\geq 4-(i+1) = 3-i,
    \end{align*}
    completing the argument.
\end{proof}
We are now prepared to prove Theorem \ref{thm:dwork-cong}.  
\begin{proof}[Proof of Theorem \ref{thm:dwork-cong}]
We have seen from Proposition \ref{prop:b-redux} and (\ref{eq:inner-sum-suficient-congruence} that it suffices to show that each $C_i \equiv 0 \pmod{p^3}$, which we will do by showing that a certain multiple of each $C_i$ is equivalent modulo $p^3$ to the residue sums of the $R_i$ functions defined above.  We first consider $i=1$.  To compute the residues $B^{(1)}_k$, we first must compute $A^{(1)}_k$.  This computation can be done directly, as
\begin{align*}
    A^{(1)}_k = (t+k)^2R(t) \biggr\vert_{t = -k} &= \frac{ \prod_{j=1}^4 (k+1-pr_j')_{t_j} (k+q_1)_{u_1+1}}{(k+1-pq_1')_{u_1+1} (-k)_k^2 (1)_{p-k-1}^2} \\
    &= \frac{\prod_{j=1}^4 (k+1-pr_j')_{t_j} (k+q_1)_{u_1+1}}{(k+1-pq_1')_{u_1+1} (1)_k^2 (1)_{p-k-1}^2}.
\end{align*}
We will interpolate this expression $p$-adically using (\ref{eq:shift-fact-to-p-gamma}), but first we make a few observations we will need for our reductions.  By the definitions of the function $\nu$ and of $t_j$ we have
\begin{equation}\label{eq:nu-rewrite}
    \nu(t_j, p-k-1) = \nu(k+1, p-t_j) = \nu(k+1, [r_j]_0).
\end{equation}
For the term $(k+q_1)_{u_1+1}$, we can use (\ref{eq:shift-fact-to-p-gamma}) to see that
\[
    (k+q_1)_{u_1+1} = (-1)^{u_1+1} \frac{\Gamma_p(k+1+pq_1')}{\Gamma_p(k+q_1)} (k+q_1+[-k-q_1]_0)^{\nu(u_1+1, [-k-q_1]_0)}.
\]
As $q_1 = -u_1+pq_1'$, we have that $-k-q_1 = u_1-k-pq_1'$ and so
\[
    [-k-q_1]_0 = [u_1-k-pq_1']_0 = \begin{cases} u_1 - k & 0 \leq k \leq u_1 \\ p+u_1-k & u_1 < k \leq p-1. \end{cases}
\]
Additionally, we have
\begin{align*}
    \nu(u_1+1, [-k-q_1]_0) &= \begin{cases} \nu(u_1+1, u_1-k) & 0 \leq k \leq u_1 \\ \nu(u_1+1, p+u_1-k) & u_1 < k \leq p-1 \end{cases} \\
    &= \begin{cases} 1 & 0 \leq k \leq u_1 \\ 0 & u_1 < k \leq p-1 \end{cases} \\
    &= \nu(u_1+1,k).
\end{align*}
Combining these two calculations yields
\[
    (k+q_1+[-k-q_1]_0)^{\nu(u_1+1, [-k-q_1]_0)} = (pq_1')^{\nu(u_1+1,k)}.
\]
We choose $i \in \left\{1, 2\right\}$ such that $u_i \ne 1$, which is possible by (\ref{eq:u-identities}).  This choice allows us to use Lemma \ref{lem:p-gamma-properties} to conclude that 
\[
    \Gamma_p(q_1)\Gamma_p(q_2) = \Gamma_p(q_i)\Gamma_p(1-q_i) \frac{\Gamma_p(2-q_i)}{\Gamma_p(1-q_i)} = (-1)^{p-u_i+1}(1-q_i) = (-1)^{u_i+1+\epsilon_i}q_1',
\]
where $\epsilon_i \in \left\{0, 1\right\}$ depends only on $\mathbf{\beta}$ and $p$.  To finish simplifying $A_k^{(1)}$ we use Lemma \ref{lem:p-gamma-properties}, (\ref{eq:shift-fact-to-p-gamma}), (\ref{eq:shift-fact-to-p-gamma}), (\ref{eq:nu-rewrite}), as well as Theorem \ref{thm:p-gamma-approx} and its logarithmic derivative.  This gives
\begin{align*}
    A^{(1)}_k &= \frac{ \prod_{j=1}^4 \frac{\Gamma_p(k+1-r_j)}{\Gamma_p(k+1-pr_j')}\left(p(1-r_j')\right)^{\nu(k+1, [r_j]_0)} (-1)^{u_1+1} \frac{\Gamma_p(k+1+pq_1')}{\Gamma_p(k+q_1)}(pq_1')^{\nu(u_1+1,k)}}{(-1)^{u_1+1} \frac{\Gamma_p(k+q_2)}{\Gamma_p(k+1-pq_1')}(pq_2')^{\nu(k,u_2)} \Gamma_p^2(k+1) \Gamma_p^2(p-k)} \\
    &= \frac{ \prod_{j=1}^4 \Gamma_p(-r_j)(-r_j)_{k+1} \Gamma_p(k+1-pq_1') \Gamma_p(k+1+pq_1')(pq_1')}{ \prod\limits_{i=1}^2\Gamma_p(q_i)(q_i)_k \frac{\Gamma_p(k+q_2)}{\Gamma_p(k+1-pq_1')}(pq_2')^{\nu(k,u_2)} \Gamma_p^2(k+1) \Gamma_p^2(p-k) \prod\limits_{j=1}^4 \Gamma_p(k+1-pr_j')} \\
    &= (-1)^{t_1+t_2}p \prod_{j=1}^4 \frac{(r_j)_k}{(q_j)_k} \frac{q_1'\Gamma_p(k+1+pq_1')\Gamma_p(k+1-pq_1')}{\Gamma_p(q_1)\Gamma_p(q_2) \Gamma_p^2(p-k) \prod_{j=1}^4 \Gamma_p(k+1-pr_j')} \\
    &= (-1)^{t_1+t_2+u_i+1}p \prod_{j=1}^4 \frac{(r_j)_k}{(q_j)_k} \frac{q_1' \Gamma_p(k+1+pq_1') \Gamma_p^2(k+1-p)\Gamma_p(k+1-pq_1')}{(q_i-1)\prod_{j=1}^4 \Gamma_p(k+1-pr_j')} \\
    &= (-1)^{t_1+t_2+u_i+\epsilon_i+1}p \prod_{j=1}^4 \frac{(r_j)_k}{(q_j)_k} (1+O(p)).
\end{align*}
The residues $B^{(1)}_k$ can be computed explicitly as well.  For each $0 \leq k \leq p-1$, we have
\begin{align*}
    B^{(1)}_k &= \lim_{t \to -k} \frac{d}{dt}\left((t+k)^2R(t)\right) \\
    &= A(k) \biggr( \sum_{j=1}^4 \frac{\frac{d}{dt}(-t+1-pr_j')_{t_j} }{(-t+1-pr_j')_{t_j}} + \frac{ \frac{d}{dt}(-t+q_1)_{u_1+1}}{(-t+q_1)_{u_1+1}} - \frac{ \frac{d}{dt}(-t+1-pq_1')_{u_1+1}}{(-t+1-pq_1')_{u_1+1}} \\
    &\hspace{30mm} - 2\frac{\frac{d}{dt} (t)_k}{(t)_k} - 2\frac{(t+k+1)_{p-k-1}}{(t+k+1)_{p-k-1}} \biggr) \biggr\vert_{t = -k}
\end{align*}
Each of these logarithmic derivatives can be written in terms of the $G$ functions using (\ref{eq:log-der}), and subsequently we use (\ref{eq:Gk-p-shift}) to simplify these expressions.  After a large amount of simplification, one finds that
\[
    B^{(1)}_k = -A^{(1)}_k \left( J_1(k;\mathbf{\alpha}, \mathbf{\beta}) + \sum_{j=1}^4 \frac{\nu(k, t_j)}{pr_j'} + \frac{\nu(u_1+1,k)}{pq_1'} - \frac{\nu(k, u_2)}{pq_2'} \right) + O(p^2).
\]
Therefore, if we consider the full residue sum multiplied by $(-1)^{t_1+t_2+u_i+\epsilon_i}p$ and use the $p$-adic valuations from Corollary \ref{cor:hypergeometric-valuations-through-p} and Lemma \ref{lem:Dk-valuation}, we have
\begin{align*}
    0 =& (-1)^{t_1+t_2+u_i+\epsilon_i}p \left(\sum_{k=0}^{p-1} B^{(1)}_k + \sum_{k=1}^{u_1+1} D^{(1)}_k\right) \\
    =& \sum_{k=0}^{p-1} p^2 \prod_{j=1}^4 \frac{(r_j)_k}{(q_j)_k} \left(J_1(k) +\sum_{j=1}^4 \frac{\nu(k, t_j)}{pr_j'} + \frac{\nu(u_1+1, k)}{pq_1'} - \frac{\nu(k, u_2)}{pq_2'} \right) + O(p^3) \\
    =& \sum_{k=0}^{u_1} p \prod_{j=1}^4 \frac{(r_j)_k}{(q_j)_k} \left( pJ_1(k) + \frac{1}{q_1'} \right) + \sum_{k=u_1+1}^{t_1} p \prod_{j=1}^4 \frac{(r_j)_k}{(q_j)_k} pJ_1(k) \\
    &+ \sum_{k=t_1+1}^{t_2} p \prod_{j=1}^4 \frac{(r_j)_k}{(q_j)_k} \left( pJ_1(k) + \frac{1}{r_1'} \right) + O(p^3) \\
    = &C_1+O(p^3).
\end{align*}
Therefore, $C_1 \equiv 0 \pmod{p^3}$.
Showing that $C_2 \equiv 0 \pmod{p^3}$ is very similar to the proof for $C_1$, although the particular details are significantly more involved as we now must compute residues at poles of order 3.  As such, we again omit many of the details.  To calculate $A^{(2)}_k$, we choose $i \in \left\{1,2\right\}$ so that $u_i \ne 1$ which allows for the simplifications
\begin{align*}
    A^{(2)}_k &= \frac{\prod_{j=1}^4 (k+1-pr_j')_{t_j} (k+q_1)_{u_1+1}}{(k+1-pq_1')_{u_1+1}(-1)^k(1)_k^3(1)_{p-1-k}^3} \\
    &= (-1)^{t_1+t_1+k+1}p \prod_{j=1}^4 \frac{(r_j)_k}{(q_j)_k} \frac{q_1' \Gamma_p(k+1+pq_1') \Gamma_p(k+1-pq_1')}{\Gamma_p(q_1)\Gamma_p(q_2) \Gamma_p(k+1) \Gamma_p^3(p-k) \prod\limits_{j=1}^4 \Gamma_p(k+1-pr_j')} \\
    &= (-1)^{t_1+t_2+u_i+\epsilon_i} p \prod_{j=1}^4 \frac{(r_j)_k}{(q_j)_k} \frac{\Gamma_p(k+1+pq_1') \Gamma_p^3(k+1-p) \Gamma_p(k+1-pq_1')}{\Gamma_p(k+1)\prod_{j=1}^4 \Gamma_p(k+1-pr_j')} \\
    &= (-1)^{t_1+t_2+u_i+\epsilon_i}p \prod_{j=1}^4 \frac{(r_j)_k}{(q_j)_k}(1+O(p)).
\end{align*}
We can compute each 
\[
    B^{(2)}_k = \frac{1}{2} \lim_{t \to -k} \frac{d^2}{dt^2} \left((t+k)^3 R_2(t)\right) \biggr\vert_{t=-k}
\]
explicitly using the residue formula.  The derivative can be written purely in terms of logarithmic derivatives of Pochhammer symbols, which we can then apply (\ref{eq:log-der}) and (\ref{eq:second-log-der}) to in order to obtain an expression in terms of $\Gamma_p, G_1$, and $G_2$.  The exact expression for $B^{(2)}_k$ is a sum over 36 distinct terms, and so we leave it out for readability.  Although the computation is necessarily much larger than that needed to determine $B^{(1)}_k$, many of the underlying ideas and techniques are the same.  Once expressed in terms of $\Gamma_p, G_1$, and $G_2$ we are able to utilize Lemma \ref{lem:p-gamma-properties}, Theorem \ref{thm:p-gamma-approx}, as well as the identities (\ref{eq:G1-reflection}), (\ref{eq:G2-reflection}), (\ref{eq:G1-to-G2-at-1}), and (\ref{eq:G1-to-G2-at-1}) that we have established for $\Gamma_p$, $G_1$, and $G_2$ to eventually reduce $B^{(2)}_k$ to
\begin{align*}
    B^{(2)}_k &= A^{(2)}_k \Biggr(O(p) + J_2(k; \mathbf{\alpha}, \mathbf{\beta})  \\
    &\hspace{10mm} + \frac{1}{p}\left(\left(\sum_{j=1}^4\frac{\nu(k, t_j)}{r_j'} + \frac{\nu(u_1+1, k)}{q_1'} - \frac{\nu(k, u_2)}{q_1'}\right) J_1(k) + O(p) \right) \\
    &\hspace{10mm} + \frac{1}{p^2} \left(2\frac{\nu(k,u_2)}{(q_2')^2} + 2 \!\!\! \sum_{1 \leq i < j \leq 4} \!\!\! \frac{\nu(k, t_j)}{r_i'r_j'} - 2\sum_{j=1}^4 \frac{\nu(k,u_2)}{r_j'q_2'} \right) \Biggr).
\end{align*}
Note that the term being multiplied by $p^2$ is equal to zero for all $0 \leq k \leq t_2$.  We now multiply our residue sum by $(-1)^{t_1+t_2+u_i+\epsilon_i}$ which yields
\[
    0 = (-1)^{t_1+t_2+u_i+\epsilon_i}p^2 \left( \sum_{k=0}^{p-1} B^{(2)}_k + \sum_{k=1}^{u_1+1} D^{(2)}_k\right). 
\]
By Lemma \ref{lem:Dk-valuation} we have $p^2 D^{(2)}_k \equiv 0 \pmod{p^3}$ for all $1 \leq k \leq u_1+1$.  Additionally, by Corollary \ref{cor:hypergeometric-valuations-through-p} the $p$-adic valuation of the hypergeometric coefficients $A(k)$ is at least $2$ for all $t_2 \leq k < p-1$.  It follows that $p^2 B^{(2)}_k \equiv 0 \pmod{p^3}$ for all such $k$.  Therefore, 
\begin{align*}
    0 &\equiv (-1)^{t_1+t_2+u_i+\epsilon_i}p^3 \sum_{k=0}^{t_2} \tilde{B}_k \\
    &\equiv p^2 \Biggr[ \sum_{k=0}^{u_1} \prod_{j=1}^4 \frac{(r_j)_k}{(q_j)_k} \frac{J_1(k)}{q_1'} + \sum_{k=u_1+1}^{t_1} p\prod_{j=1}^4 \frac{(r_j)_k}{(q_j)_k} J_2(k) \\
    &\hspace{5mm} + \sum_{k=t_1+1}^{t_2} \prod_{j=1}^4 \frac{(r_j)_k}{(q_j)_k} \frac{J_1(k)}{r_1'} \Biggr] \pmod{p^3}.
\end{align*}
This final expression is exactly $C_2$, and so $C_2 \equiv 0 \pmod{p^3}$.  This completes the proof of Theorem \ref{thm:dwork-cong}.
\end{proof}

\Urlmuskip=0mu plus 1mu\relax
\RaggedRight
\printbibliography
\end{document}